\documentclass[12pt]{amsart}
\usepackage{preamble}
%\linenumbers
% \usepackage{showkeys} 

\begin{document}

\title[Divergence-Kernel method for linear responses and generative models]{
Divergence-kernel method for linear responses of densities and generative models
}

\begin{abstract}
We derive the divergence-kernel formula for the linear response of random dynamical systems. 
Specifically, the pathwise expression is for the parameter-derivative of the marginal or stationary density, not an averaged observable.
Our formula works for multiplicative and parameterized noise over any period of time; it does not require hyperbolicity. 
Then we derive a Monte-Carlo algorithm for linear responses.

We develop a new framework of generative models, DK-SDE, where the model is a parameterized SDE, that 
(1) directly uses the KL divergence between the empirical data distribution and the marginal density of the SDE as the training objective, and
(2) accommodates parametrizations in both drift and diffusion over a long time span, allowing prior structural knowledge to be incorporated explicitly.
The optimization is done by gradient-descent enabled by the divergence-kernel method, which involves only forward processes and therefore substantially reduces memory cost.
We demonstrate the new model on a 20-dimensional Lorenz system.

\smallskip
\noindent \textbf{AMS subject classification numbers.}
60H07, %Stochastic calculus of variations and the Malliavin calculus
60J60, % Diffusion processes
65D25, % Numerical differentiation
65C30, %Stochastic differential and integral equations
%\kwd{37M25} % Computational methods for ergodic theory (approximation of invariant measures, computation of Lyapunov exponents, entropy, etc.)
65C05. % Monte Carlo methods
%\kwd{60G10} % Stationary stochastic processes

\smallskip
\noindent \textbf{Keywords.}
Linear response,
Divergence method,
Likelihood ratio,
SDE,
Generative model.

\end{abstract}

\maketitle

\section{Introduction}
\label{s:intro}

\subsection{Main results}

This paper rigorously derives the divergence-kernel formula for the linear response in discrete-time random dynamical systems.
It is a pathwise expression of the derivative of marginal or stationary density with respect to parameters in the drift or diffusion coefficients.
Then we \textit{formally} pass to the continuous-time limit in \Cref{t:divker_sde}.

\begin{formula}[divergence-kernel formula for linear responses of SDEs]
\label{t:divker_sde}
Consider the SDE in $\R^M$ parameterized by $\gamma$,
\[
dx^\gamma_t = F^\gamma_t(x^\gamma_t) dt + \sigma^\gamma_t(x^\gamma_t)dB_t,
\]
For any backward adapted process $\{ \alpha_t \}_{ t=0 }^T$, let $\{ \nu_t \}_{ t=0 }^T$ be the forward covector process at $\gamma=0$ such that
\begin{equation*}\begin{split}
  d \nu
  = \left( 
  (\nabla \sigma \nabla \sigma^T 
  - \nabla F^T - \alpha)\nu 
  - \nabla \div F 
  + \nabla^2 \sigma \nabla \sigma 
  + \nabla \sigma \Delta \sigma \right) dt
  - 
  (\nabla \sigma \nu^T 
  + \nabla^2 \sigma 
  + \alpha \sigma^{-1}) dB
\end{split}\end{equation*}
with initial condition $ \nu_0 = \nabla \log h_0(x_0)$,
where $h_t$ is the probability density of $x_t$.
Then
$ \nabla \log h_t (x_t) = \E{\nu_t  \middle| x_t}$.
Moreover, let $\delta (\cdot) :=\partial (\cdot)/ \partial \gamma$, then
\begin{equation*} \begin{split}
  \delta \log h^\gamma_T (x_T) 
  = 
  \mathbb{E}\left[
  \delta \log h^\gamma_0 
  + \int_{t=0}^{T}
  \delta \sigma^\gamma \nabla \sigma^T \nu 
  - \delta F^{\gamma T} \nu
  - \div \delta F^\gamma 
  + \delta \sigma^\gamma \Delta \sigma 
  + \nabla \sigma^T \nabla \delta \sigma^\gamma dt
  \right.
  \\
  \left.
  - \int_{t=0}^{T}
  (\delta \sigma^\gamma  \nu^T
  + \nabla \delta \sigma^{\gamma T}) dB
  \middle| x_T\right].
\end{split} \end{equation*}
\end{formula}

Here $\Delta \sigma$ is the Laplacian of $\sigma$, $B$ is the Brownian motion, and the SDE is Ito.
In this paper $\sigma$ is assumed to be a scalar function uniformly bounded away from 0, and extension to matrices is conceptually straightforward.
The time integrations are the limits of \Cref{e:backito,e:siye}.
For large $T$, we can just pick $\alpha$ larger than the magnitude of the strongest contraction rate, which stabilizes $\nu$ from growing exponentially fast.
In fact, for memory-light implementations we restrict to $\alpha_t=\alpha(x_t)$ or constant $\alpha$; the backward-adapted freedom is mainly a theoretical gauge.
The conditional expectation is taken over all paths ending at the same point; it turns pathwise quantities into terminal density derivative.
The main advantages of this formula are:
\begin{enumerate}
  \item Compared with the path-kernel method in \cite{dud,apk}, it computes derivative of the marginal density, not an averaged observable.
  Moreover, it involves forward processes so the memory cost is lower than the adjoint path-kernel method, yet it still computes the full gradient with cost independent of the number of parameters.
  \item Compared with the finite-element method in \cite{FP25optimalRes}, its main expression is pathwise, so we can compute it via Monte Carlo algorithms in high dimensions.
  \item Compared with the likelihood ratio method in \cite{Glynn1990}, the diffusion coefficient $\sigma$ can depend on the parameter $\gamma$ and the location $x$.
  \item Compared with the fast response method in \cite{fr}, it does not require hyperbolicity (in the sense of dynamical systems). 
  \item The size of the expression does not grow exponentially to $T$, so we can easily extend our result to stationary distributions (see \Cref{t:ergodic}).
  \item Compared with the author's divergence-kernel method for scores in \cite{divKer}, this paper considers the parameter-gradients, which enables a new framework of parametric SDE generative models.
\end{enumerate}
This formula gives an efficient gradient tool for diffusion processes with not too small noise or not too strong contraction.
In \Cref{s:eg_lorenz}, the corresponding Monte-Carlo algorithm computes the linear response of the marginal density of a Lorenz 96 system, which is difficult for other methods.

With the new divergence-kernel tool, we develop a new framework for generative models.
The logic is rather simple. 
Our model is an SDE, whose drift and diffusion depends on the state and (many) undetermined parameters.
The loss function is the KL-divergence of the empirical distribution of the data with respect to the marginal distribution of the SDE.
With the new gradient tool, we can just optimize parameters in the SDE model via stochastic gradient descent.

This new framework combines several advantages of different current generative models, and also offers some new advantages:
\begin{enumerate}
  \item Directly uses the KL divergence between the empirical data distribution and the marginal density of the SDE as the training objective.
  \item Accommodates parametrizations in both drift and diffusion, so prior knowledge of the SDE model can be easily incorporated into the model and reduce the complexity (number of parameters) of the model.
  \item Allows deep models (large $T$) without imposing normalizing requirements.
  %\item We can solve the minimization by stochastic gradient descent.
  \item It only saves the current state. In particular, we do not have a backward procedure, so we do not save the entire path, substantially reducing the memory cost. 
  \item If we use the stationary measure of the SDE model in the framework, then the ergodic formula in \Cref{t:ergodic} further improves efficiency.
\end{enumerate}
Finally, we might achieve better overall efficiency in general situations; but this requires many more experiments to verify.

\subsection{Literature review}

\subsubsection{Linear response methods}
\label{s:linear_response}

The marginal or stationary distribution of a random dynamical system is of central interest in applied sciences.
We are interested in the parameter-derivative $\delta \log h_t$, where $h_t$ is the probability density at time $t$.
There are three basic methods for expressing and computing the linear response of random dynamical systems: the path-perturbation method (shortened as the path method), the divergence method, and the kernel-differentiation method (shortened as the kernel method).
The relation and difference among the three basic methods can be illustrated in a one-step system, which is explained in \cite{Ni_kd}.

The path-perturbation method computes the path-perturbation under fixed randomness (such as Brownian motion), then averages over many realizations of randomness.
This is also known as the ensemble method or the stochastic gradient method \cite{eyink2004ruelle,lucarini_linear_response_climate}.
It also includes the backpropagation method, which is the basic algorithm for machine learning.
However, it is expensive for chaotic or unstable systems; the work-around is to artificially reduce the size of the path-perturbation, such as shadowing or clipping methods \cite{Ni_nilsas,Ni_NILSS_JCP,clip_gradients2,FP25}, but they all introduce systematic errors.
Moreover, it computes the derivative of an averaged observable, but does not give the derivative of the probability density.

The divergence method is also known as the transfer operator method, since the perturbation of the measure transfer operator is some divergence \cite{Galatolo2014,Wormell2019a,Zhang2020}.
However, it is expensive for high-dimensional systems with contracting directions.
For SDEs, we can take the derivative of the Fokker-Planck equation, which involves the divergence of the drift and a second-order differential operator \cite{SL22}, which is hard to sample on paths.

The kernel-differentiation method works only for random systems; as the name indicates, the derivative now hits the probability kernel.
It is also known as the Cameron-Martin-Girsanov theorem \cite{CM44, MalliavinBook}, the likelihood ratio method \cite{Rubinstein1989,Reiman1989,Glynn1990}, or the Monte-Carlo gradient method \cite{Caflisch2021}.
However, it cannot handle multiplicative noise or perturbation on the diffusion coefficients.
It is also expensive when the noise is small.
We gave an ergodic version and a foliated version in \cite{Ni_kd}.

Mixing two basic methods can overcome some of their major shortcomings.
Our path-divergence method (also known as the fast response formula) gives the pointwise expression of linear responses of hyperbolic deterministic chaos \cite{Ni_asl,TrsfOprt,fr,Ni_nilsas,far,vdivF}.
It is good at high-dimensional and low-noise to deterministic systems.
It can be used to solve the optimal response problem in the hyperbolic setting \cite{GN25}. 
However, it does not work when the hyperbolicity is poor \cite{Baladi2007,wormell22}, and only the unstable part involves derivatives of densities.
The path-divergence idea was also used in the proof of the basic path-perturbation or divergence linear response formulas for hyperbolic systems \cite{Dolgopyat2004,Ruelle_diff_maps,Jiang2012,Gouezel2008,Baladi2017}.

Our path-kernel method in \cite{dud} gives the pathwise linear response formula of the diffusion coefficients.
It works well when the noise level is not too small or the largest Lyapunov exponent is not too large; it also does not require hyperbolicity.
The adjoint version solves a difficult version of variational data assimilation problem in chaotic systems with unknown parameters and partial observation, where the loss is a single long-time functional accounting for discrepancies in both the observations and the dynamics.
However, it can be expensive when the noise is small and the largest Lyapunov exponent is large.
Moreover, only part of the expression is derivative on the terminal densities.
The path-kernel idea was also used in the Bismut-Elworthy-Li formula for the derivative with respect to the initial conditions \cite{Bismut84,EL94,PW19bismut}.

Our divergence-kernel method in \cite{divKer} gives the score (spatial-derivative of marginal density) of random dynamical systems and SDEs.
Compared to previous pathwise expressions for scores \cite{score14,Liu2025,MG25score}, the main advantage is that it works for multiplicative noise efficiently.
It is good at systems with not too much contraction or not too little noise.
However, it can be expensive when the noise is small and the contraction is strong.
It fits into the conventional framework of score-based diffusion models, and it enables us to use nonlinear SDEs with multiplicative noise as the forward model, since it can compute the scores efficiently.
This paper extends this result to linear responses, which further enables a new framework of generative models.

\subsubsection{Generative models}

Our new linear response formula opens up the possibilities of redesigning the current framework of diffusion models.
At a high level, the bottleneck in `parametric SDE as a generative model' is that the most direct training loss for i.i.d.\ data is the KL-divergence.
But optimizing this loss requires differentiation of the model's marginal density.
For general SDEs this was typically intractable, where many classical tricks either break or become prohibitively high-variance.
So previously we had to either replace the training loss or simplify the SDE.

A first workaround is to keep using a parameterized SDE to learn a distribution, but replace KL-divergence training loss by the discrepancy over expectations of certain observables; this covers adversarial critics, kernel MMD, and moment matching \cite{Arjovsky2017,Gretton2012,Li2015}.
This new loss does not require evaluating the terminal marginal density or its parameter-gradient, so we can use the standard backpropagation to compute the gradient of this loss.
However, in chaotic or unstable settings, we need to replace conventional backpropagation method by new methods such as the adjoint path-kernel method \cite{apk}.
More importantly, the fit of the data is limited by the chosen observable or the class holding the observable: the user needs to introduce an additional modeling choice (the observable/critic/kernel/features/weights) that mediates what aspects of the distribution are matched.

A second workaround, prominent in diffusion models \cite{diffusion_model} and flow-matching methods \cite{Lipman2023,Zhou2025}, is to introduce an auxiliary probability path: a designer-chosen time-indexed family of intermediate probability distributions $\{H_t\}_{t\in [0,T]}$ (typically $T=1$) connecting a simple base distribution $H_0$ to the data distribution $H_T$.
The main object to be learned now is the score or a velocity field, and the corresponding training loss becomes the score/denoising loss or vector-field regression loss.
Similar to the first workaround, this training loss can be differentiated by standard backpropagation.
However, the auxiliary probability path is typically decided without consulting the prior structural knowledge about the model, so the prior knowledge does not automatically translate into computational savings.
Even if we have a very simple SDE with a few parameters, the score is not necessarily simple, so the model for the score still needs a large network and potentially many training steps.

A third workaround is to keep using the KL divergence as the loss, but restrict the model class so that the dependence of the distribution on the parameters becomes tractable. 
We give three examples. 
First, autoregressive models factorize the joint density as a product of conditional densities under a chosen ordering; this introduces an additional modeling choice (the ordering) and typically requires causal/masked architectural constraints to parameterize these high-dimensional conditionals \cite{VanDenOord2016}. 
Second, in normalizing flows \cite{Dinh2017} one represents the model distribution through an invertible transport map, so that the change-of-variables formula expresses the density via a Jacobian determinant.
Gradients of the resulting log-determinant terms are closely related to divergence-type expressions, connecting to the divergence (transfer-operator) method in linear response (see the review in \Cref{s:linear_response}); in practice, moderately deep flows often require additional constraints or regularization to maintain good conditioning.
Third, one may fix the diffusion coefficient, which makes likelihood-based gradients more tractable via likelihood-ratio/Girsanov-type estimators or related score-function identities. However, this narrows the model class: if the diffusion cannot be optimized, the induced SDE marginals may remain far from the data distribution.

Summarizing, our divergence-kernel method enables the gradient tool to optimize KL divergence with respect to the diffusion coefficient or under multiplicative noise.
With this, we can work with the simplest generative model, without introducing an auxiliary probability path or an auxiliary observable and without freezing the diffusion.
This setting is common rather than exceptional: in mechanistic SDE modeling one often wants likelihood-style marginal fitting while allowing parameters in the diffusion; existing approaches typically avoid this combination.

\subsection{Structure of paper}

\Cref{s:notations} defines some basic notation and states some basic assumptions on regularities.
\Cref{s:div1step} derives our the divergence formula for the linear response of one-step random systems.
\Cref{s:divdiscrete} derives our results for discrete-time random dynamical systems by combining the one-step divergence linear response formula with the multi-step divergence-kernel formula for scores.
\Cref{s:divcts_prep} recalls some useful facts from \cite{divKer} about scores for SDEs.
\Cref{s:divkerCts} formally passes the linear response formula to the limit of SDEs.
\Cref{s:howtouse} discusses about how to use this result.

\Cref{s:numeric} uses the new formulas to numerically compute the linear responses of two model systems.
\Cref{s:eg_1dim} considers a 1-dimensional process with multiplicative noise.
For this 1-d example, we can plot the parameter-derivative of the density.
\Cref{s:eg_lorenz} computes the linear response of the stationary measure of the 40-dimensional Lorenz 96 model with multiplicative noise.
These examples cannot be solved by previous methods since the system is high-dimensional, has multiplicative noise, has contracting directions, and is non-hyperbolic.
Moreover, compared with our path-kernel methods, here we can compute the derivative of the density.

\Cref{s:diffmodel} proposes a parametric SDE generative model, DK-SDE (DK is short for divergence-kernel), which involves only two forward processes.
It uses the divergence-kernel formula (\textit{not} the backpropagation method, which belongs to the path-perturbation method) to perform a gradient descent on the parameters of an SDE model.
Its loss function is the KL-divergence between the data and the marginal distribution of the SDE.
The logic is more straightforward than the current diffusion models, and the memory cost is less.

\Cref{s:kd} briefly goes through the kernel-differentiation method and explains its problem when applied to linear responses of high-dimensional systems with respect to scalar diffusion coefficients.
We also derive the conventional likelihood ratio formula for additive noise systems using our notation.

\section{Notations and assumptions}
\label{s:notations}

We define some geometric notations for spatial derivatives.
In this paper, the ambient space is the $M$-dimensional Euclidean space $\R^M$.
We denote both vectors and covectors by column vectors in $\R^M$.
The product between a covector $\nu$ and a vector $v$ is denoted by $\cdot$, 
that is,
\begin{equation*}\begin{split}
  \nu\cdot v 
  := v\cdot \nu
  :=\nu^T v
  :=v^T \nu.
\end{split}\end{equation*}
Here $v^T$ is the transpose of matrices or (co)vectors.
Then we use $\ip{,}$ to denote the inner-product between two vectors
\begin{equation*}\begin{split}
  \ip{u,v}:=u^T v.
\end{split}\end{equation*}
Note that $\DB$ may be either a vector or a covector.

The score function is $\nabla \log{h_T} = \frac{\nabla h_T}{h_T}$, where 
\begin{eqnarray*}
  \nabla (\cdot):=\pp{(\cdot)}x,\quad
  \nabla_v (\cdot):=
  \nabla (\cdot) v :=
  \pp{(\cdot)}x v,
\end{eqnarray*}
Here $\nabla_YX$ denotes the (Riemann) derivative of the tensor field $X$ along the direction of $Y$.
It is convenient to think that $\nabla$ always adds a covariant component to the tensor.
The divergence of a vector field $X$ is defined by
\[ \begin{split}
  \div X:= \sum_{i=1}^M \nabla_{e_i} X,
\end{split} \]
where $e_i$ is the $i$-th unit vector of the canonical basis of $\R^M$.

For a map $g$, let $g_*$ be the Jacobian matrix, or the pushforward operator on vectors; readers may also denote this by $\nabla g$.
We define the covector $\div g_*$, the divergence of the Jacobian matrix $g_*$, as
\begin{equation} \begin{split} \label{e:zhao}
  \div g_* :=
  \nabla \log \left|  g_{*} \right|:=
  \frac{\nabla \left|  g_{*} \right|}{\left| g_{*} \right|},
\end{split} \end{equation}
where $\left| g_{*} \right|$ is the determinant of the matrix.
It is called a divergence due to an equivalent definition by contracting the Hessian (see \cite{TrsfOprt}).
Note that this is not the row-wise (or column-wise) divergence of a matrix.

For discrete-time results in \Cref{s:divdiscrete}, we assume that 
\begin{assumption*} 
  The density of the initial distribution $h_0$, the probability kernel $k$, drift $f$, and diffusion $\sigma$ are $C^1$ jointly in $\gamma$ and $x$.
  The diffusion field $\sigma^\gamma(x)$ is uniformly bounded away from zero.
\end{assumption*}
Our derivation is rigorous for the discrete-time case, but is \textit{formal} when passing to the continuous-time limit, so we do not list the technical assumptions here, and leave the rigorous proof to a later paper.
We just assume that all integrations, averages, and change of limits are legit.
All numerical experiments use the time-discretized formulas; the continuous-time statements serve as guiding limits.

\section{Derivation of the divergence-kernel formula for linear responses} \label{s:div}

There are two basic ingredients to our work.
The first is the divergence formula for $\delta \log h^\gamma_1$ in the one-step system, which we shall derive now.
The second is the divergence-kernel formula for the scores in many-step systems and its continuous-time limit; this result is from \cite{divKer}.

\subsection{Divergence formula One-step system} \label{s:div1step}

Let $x^\gamma_0\sim h^\gamma_0$, $b^\gamma_0\sim k$, the one-step dynamics is 
\begin{eqnarray*}
  x^\gamma_1 = f^\gamma_0(x^\gamma_0) + \sigma^\gamma_0 (x^\gamma_0) b_0.
\end{eqnarray*}
Here $\gamma$ is the parameter; when we omit $\gamma$ in our notation, it takes the default value $0$.
We sometimes omit the subscripts that indicate the time step of $f$ and $\sigma$, which are the same as the time step of the variable $x$.
Let $h^\gamma_1$ denote the density of $x^\gamma_1$, which has a natural expression:
\begin{equation}\label{e:h1}
  h^\gamma_1(x_1) = \int h^\gamma_0(x_0)p^\gamma_0(x_0,x_1) dx_0.
\end{equation}
where the probability kernel is
\begin{equation*}\begin{split}
  p^\gamma_0(x_0,x_1) := (\sigma^\gamma_0) ^{-M}(x_0) 
  k\left(\frac{x_1-f^\gamma_0(x_0)}{\sigma^\gamma_0(x_0)}\right).
\end{split}\end{equation*}

Change the variable $x_0$ to $b_0=(x_1-f^\gamma_0(x_0)) / \sigma^\gamma_0(x_0)$ under fixed $x_1$.
Define
\begin{equation*}\begin{split}
  x_1 = g^\gamma_{b_0}(x_0) := f^\gamma_0(x_0) + \sigma^\gamma_0(x_0) b_0
\end{split}\end{equation*}
Since $x_1$ is fixed, $x_0$ and $b_0$ are related by
\begin{equation*}\begin{split}
  0 = dx_1 = g^\gamma_{b_0*}(x_0) dx_0
  + \sigma^\gamma_0(x_0) d b_0,
\end{split}\end{equation*}
where $g^\gamma_{b_0*} := \nabla f^\gamma + b_0 (\nabla \sigma^\gamma) ^T$ is the pushforward operator of $g_{b_0}$.
Changing the variable causes a nontrivial Jacobian determinant.
\[ \begin{split}
  \left| \dd {b_0}{x_0} \right|
  = (\sigma_0^\gamma)^{-M}(x_0) \left| g^\gamma_{b_0*} (x_0) \right|,
\end{split} \]
where $\left| g^\gamma_{b_0*} \right| $ is the determinant.
So we can express $h^\gamma_1$ by an integration over $b_0$,
\begin{equation} \begin{split} \label{e:h1_b}
  h^\gamma_1(x_1) = \int h^\gamma_0(x^\gamma_0)k(b_0) \left|  g^\gamma_{b_0*} (x^\gamma_0) \right| ^{-1} db_0,
  \quad \textnormal{where} \quad 
  x^\gamma_0 = (g^\gamma_{b_0})^{-1}(x_1).
\end{split} \end{equation}
For now, we assume $g^\gamma_{b_0}$ is injective, since this is the case when we discretize SDEs; if it is $n$-to-1, then we need to further sum over its preimage.

\begin{lemma} [Divergence formula for one-step linear response] \label{t:div1step}
\begin{equation*} \begin{split}
  \delta \log h^\gamma_1 (x_1) 
  = \E{
  \frac{\delta h^\gamma_0 + \nabla_{\delta x^\gamma_0} h_0}{h_0}(x_0)
  - \frac{\delta\left| g^\gamma_{b_0*} \right| 
  + \nabla_{\delta x^\gamma_0} \left|  g_{b_0*} \right|}
  {\left| g_{b_0*} \right|}(x_0)
  \middle| x_1},
\end{split} \end{equation*}
where $ \delta x^\gamma_0 =- g^{-1}_{b_0*} \delta g^\gamma_{b_0}(x_0).  $
\end{lemma}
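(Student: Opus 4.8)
The plan is to differentiate the $b_0$-representation \Cref{e:h1_b} with respect to $\gamma$ at $\gamma=0$ and divide by $h_1$. The decisive feature of that representation is that, having changed the integration variable from $x_0$ to $b_0$, the integrating measure $k(b_0)\,db_0$ carries \emph{no} $\gamma$-dependence; all parameter-dependence now sits inside the integrand, so differentiation under the integral sign is clean (and justified by the $C^1$ assumption). I also use the injectivity of $g^\gamma_{b_0}$ assumed just before the lemma, so that $x^\gamma_0=(g^\gamma_{b_0})^{-1}(x_1)$ is a single well-defined branch.

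First I would pin down how the base point moves with the parameter. Since $x_1$ is held fixed while $x_1=g^\gamma_{b_0}(x^\gamma_0)$, differentiating in $\gamma$ gives $0 = \delta g^\gamma_{b_0}(x_0) + g_{b_0*}(x_0)\,\delta x^\gamma_0$, hence $\delta x^\gamma_0 = -\,g^{-1}_{b_0*}\,\delta g^\gamma_{b_0}(x_0)$, which is the stated formula. Throughout, $\delta$ denotes the explicit partial derivative in $\gamma$ holding the spatial argument fixed, while the transported contribution along $\delta x^\gamma_0$ is written as the directional derivative $\nabla_{\delta x^\gamma_0}$.

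Next I would differentiate the integrand $\Phi^\gamma(b_0):=h^\gamma_0(x^\gamma_0)\,\bigl|g^\gamma_{b_0*}(x^\gamma_0)\bigr|^{-1}$. By the chain rule there are exactly two contributions to $\partial_\gamma\Phi^\gamma|_{\gamma=0}$: the explicit $\gamma$-dependence of $h^\gamma_0$ and of $|g^\gamma_{b_0*}|$ (the $\delta$-terms), and the implicit dependence through the moving base point $x^\gamma_0$ (the $\nabla_{\delta x^\gamma_0}$-terms). Applying the logarithmic product rule and factoring out $\Phi$ gives
\[
  \partial_\gamma \Phi
  = \Phi\left[
  \frac{\delta h^\gamma_0 + \nabla_{\delta x^\gamma_0} h_0}{h_0}(x_0)
  - \frac{\delta\bigl|g^\gamma_{b_0*}\bigr| + \nabla_{\delta x^\gamma_0}\bigl|g_{b_0*}\bigr|}{\bigl|g_{b_0*}\bigr|}(x_0)
  \right].
\]

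Finally I would assemble the ratio. Dividing $\delta h^\gamma_1(x_1)=\int \partial_\gamma\Phi\,k(b_0)\,db_0$ by $h_1(x_1)=\int \Phi\,k(b_0)\,db_0$ yields $\delta\log h^\gamma_1(x_1)$, and the normalized weight $\Phi(b_0)\,k(b_0)\,db_0\,/\,h_1(x_1)$ is precisely the disintegration giving the conditional law of $(x_0,b_0)$ given $x_1$. Hence the quotient of the two integrals is exactly $\mathbb{E}[\,\cdot\mid x_1]$ applied to the bracketed quantity above, which is the claim. The only points needing care are the justification of differentiating under the integral sign and the identification of this normalized weight with the conditional law of the preimage; the remaining calculus is routine bookkeeping separating explicit from transported $\gamma$-dependence, so I do not expect a serious obstacle here.
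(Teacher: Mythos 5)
Your proposal is correct and follows essentially the same route as the paper's own proof: differentiate the $b_0$-parametrized representation \Cref{e:h1_b}, letting the derivative hit $h^\gamma_0$, $\left|g^\gamma_{b_0*}\right|$, and the moving preimage $x^\gamma_0$ (obtained by implicitly differentiating $x_1=g^\gamma_{b_0}(x^\gamma_0)$), then divide by $h_1$ and identify the normalized weight as the conditional law given $x_1$. Your write-up merely makes explicit a few points the paper leaves implicit (differentiation under the integral sign, the disintegration argument), but the argument is the same.
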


\begin{proof}
Differentiate \Cref{e:h1_b} by $\gamma$, note that the dummy variable $b_0$ is not differentiated; the derivative hits $h^\gamma_0$, $ g^\gamma_{b_0*}$, and $x^\gamma_0$.
We get
\begin{equation*} \begin{split}
  \delta h^\gamma_1 (x_1) 
  = \int \left(\frac{\delta h^\gamma_0 + \nabla_{\delta x^\gamma_0} h_0}{h_0}(x_0)
  - \frac{\delta\left| g^\gamma_{b_0*} \right| 
  + \nabla_{\delta x^\gamma_0} \left|  g_{b_0*} \right|}
  {\left| g_{b_0*} \right|}(x_0)
  \right)
  h_0(x_0)k(b_0) \left| g_{b_0*} (x_0) \right| ^{-1} db_0,
\end{split} \end{equation*}
To get $\delta x^\gamma_0$, differentiate $x_1 = g^\gamma_{b_0}(x^\gamma_0)$, note that $x_1$ is fixed, we get
\begin{equation*}\begin{split}
  0 = \delta g^\gamma_{b_0}(x_0)
  + g_{b_0*}\delta x^\gamma_0.
\end{split}\end{equation*}
Hence, $\delta x^\gamma_0 =- g^{-1}_{b_0*} \delta g^\gamma_{b_0}(x_0)$.
Then substitute to prove the lemma.
\end{proof}

We do not work on the divergence-kernel method for linear responses in one-step, since it is not directly used.
The reason is that the kernel formula for linear responses (see \Cref{t:ker1step} in \Cref{s:kd}) has a term with factor $M$, which can be expensive (since large terms require many samples to compute averages) for important applications.
The kernel formula for scores does not have this large term.
Hence, we choose to use the divergence formula for the linear response and the divergence-kernel formula for the scores.

That being said, we can certainly mix the kernel and divergence linear response formulas in one-step, then repeatedly apply to many-step systems, and then pass to the continuous-time limit.
That approach might be beneficial if we later work with anisotropic noise, where $\sigma$ is a matrix, and each parameter controls only a low-dimensional structure, so we do not have the $M$ factor. 
Another scenario where this approach might be more useful is that the noise is low-dimensional.
Both situations incur other difficulties, so we will defer to later papers.

\subsection{Discrete-time divergence-kernel formula for linear responses} \label{s:divdiscrete}

Starting from the initial density $x^\gamma_0 \sim h^\gamma_0$, consider the discrete-time random dynamical system
\begin{equation} \begin{split} \label{e:discreteEq}
  x^\gamma_{n+1} = f^\gamma_n(x^\gamma_n) + \sigma^\gamma_n(x^\gamma_n)b_n,
\end{split} \end{equation}
where $b_n$ is i.i.d.\ distributed according to the kernel density $k$.
Denote the density of $x^\gamma_n$ by $h^\gamma_n$.
The system runs for a total of $N$ steps.

We use the divergence formula in one-step, then substitute the recent divergence-kernel formula for the score at $\gamma=0$.
Note that the score formula contains the kernel-differentiation formula for the score, but not the kernel-differentiation formula for linear responses.
Hence, we do not have the term involving $M$ which can potentially get very large for scalar diffusion coefficients.
First, recall the following:

\begin{definition}
\label{d:backward}
  We say that a process $\{ \alpha_n \}_{ n=0 }^N$ is \textit{backward adapted}, or adapted to the backward filtration, if $\alpha_n$ belongs to the $\sigma$-algebra $\sigma(x_n,\ldots, x_N)$.
\end{definition}

\begin{lemma} [N-step forward divergence-kernel formula for score \cite{divKer}] \label{t:divkernstep_score}
For any backward adapted process $\{ \alpha_n \}_{ n=0 }^N$,
let $\{ \nu_n \}_{ n=0 }^N$ be the forward covector process
\begin{equation*}\begin{split}
\nu_0 = \nabla \log h_0(x_0),
\qquad
\nu_{n+1} = (1-\alpha_{n+1}) g_{b_n}^{*-1}
\left( \nu_{n} - \div g_{b_{n}*} (x_{n}) \right)
+ \frac {\alpha_{n+1} \nabla \log k (b_{n})} {\sigma_n(x_{n})} .
\end{split}\end{equation*}
Then for any $N\ge0$,
\begin{equation*}\begin{split}
  \nabla \log h_N (x_N) 
  = \E{\nu_N  \middle| x_N}.
\end{split}\end{equation*}
\end{lemma}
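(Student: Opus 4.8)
The plan is to prove, by forward induction on $n$, a statement slightly stronger than the claim, conditioning on the entire future rather than on $x_N$ alone. Writing $\mathcal{G}_n := \sigma(x_n,\ldots,x_N)$ for the backward (future) $\sigma$-algebra, I would take as the inductive hypothesis
\[
  H_n:\qquad \E{\nu_n \middle| \mathcal{G}_n} = \nabla \log h_n(x_n).
\]
Since $\mathcal{G}_N = \sigma(x_N)$, the case $n=N$ is exactly the asserted formula, and the base case $H_0$ is immediate because $\nu_0 = \nabla \log h_0(x_0)$ is already $\mathcal{G}_0$-measurable.

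Before the induction I would record the two one-step score identities for a single transition $x_n \mapsto x_{n+1}$, both obtained by differentiating the density-transfer formula in $x_{n+1}$ and reading the resulting ratio of integrals as a conditional expectation under the joint law $h_n(x_n)\,p_n(x_n,x_{n+1})$. Differentiating the kernel form \Cref{e:h1} and using $\nabla_{x_{n+1}}\log p_n(x_n,x_{n+1}) = \sigma_n^{-1}(x_n)\,\nabla\log k(b_n)$ gives the \emph{kernel identity}
\[
  \nabla \log h_{n+1}(x_{n+1}) = \E{\sigma_n^{-1}(x_n)\,\nabla \log k(b_n) \middle| x_{n+1}};
\]
differentiating instead the Jacobian form \Cref{e:h1_b}, using $\nabla_{x_{n+1}} x_n = g_{b_n*}^{-1}$ together with the definition \Cref{e:zhao} of $\div g_{b_n*}$, gives the \emph{divergence identity}
\[
  \nabla \log h_{n+1}(x_{n+1}) = \E{g_{b_n}^{*-1}\!\left(\nabla \log h_n(x_n) - \div g_{b_n*}(x_n)\right) \middle| x_{n+1}}.
\]

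For the inductive step I would split the update as $\nu_{n+1} = (1-\alpha_{n+1})A_n + \alpha_{n+1}C_n$ with divergence part $A_n := g_{b_n}^{*-1}(\nu_n - \div g_{b_n*}(x_n))$ and kernel part $C_n := \sigma_n^{-1}(x_n)\nabla \log k(b_n)$. Because $\{\alpha_n\}$ is backward adapted, $\alpha_{n+1}$ is $\mathcal{G}_{n+1}$-measurable, so both weights factor out of $\E{\cdot \middle| \mathcal{G}_{n+1}}$; it therefore suffices to show that $\E{A_n \middle| \mathcal{G}_{n+1}}$ and $\E{C_n \middle| \mathcal{G}_{n+1}}$ both equal $\nabla \log h_{n+1}(x_{n+1})$, after which $(1-\alpha_{n+1})$ and $\alpha_{n+1}$ recombine to the identity. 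Since $C_n$ depends only on $(x_n,b_n)$, the Markov property collapses $\E{C_n \middle| \mathcal{G}_{n+1}}$ to $\E{C_n \middle| x_{n+1}}$, which is the kernel identity. For $A_n$ I would first pass to the finer $\mathcal{G}_n \supseteq \mathcal{G}_{n+1}$ by the tower property: injectivity of $x_{n+1} = g_{b_n}(x_n)$ makes $b_n$, and hence $g_{b_n}^{*-1}$ and $\div g_{b_n*}(x_n)$, $\mathcal{G}_n$-measurable, so they pull out and $H_n$ replaces $\E{\nu_n \middle| \mathcal{G}_n}$ by $\nabla \log h_n(x_n)$; the resulting $\mathcal{G}_n$-measurable quantity is then pushed back to a conditioning on $x_{n+1}$ by the Markov property, giving exactly the divergence identity. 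This proves $H_{n+1}$.

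The hard part is the entanglement produced by backward adaptedness: since each $\alpha_j$ sees the whole future trajectory, the estimator $\nu_n$ is \emph{not} adapted to the forward filtration, and one cannot simply substitute $\nabla \log h_n$ for $\nu_n$ inside $\E{\cdot \middle| x_{n+1}}$. The crux, and the step the write-up must get exactly right, is the choice of conditioning on the full future $\sigma$-algebra $\mathcal{G}_n$: it is coarse enough to absorb all of the future-dependence of $\nu_n$ and to make $\alpha_{n+1}$ measurable (so $H_n$ applies and the weights factor), yet fine enough to freeze $b_n$, after which the Markov property collapses everything back to a conditioning on $x_{n+1}$ where the one-step identities live. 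The only remaining technical points are integrability and differentiation under the integral sign in the two one-step identities, which is where the joint $C^1$ assumption on $h_0$, $k$, $f$, and $\sigma$ enters.
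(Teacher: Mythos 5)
The paper does not actually prove this lemma: it is imported verbatim from \cite{divKer} and stated without proof, so there is no in-paper argument to compare yours against. That said, your proof is correct and self-contained. The two one-step identities are exactly the two differentiations (of \Cref{e:h1} in $x_{n+1}$ for the kernel identity, and of \Cref{e:h1_b} in $x_{n+1}$ for the divergence identity, where the derivative passes through $x_n = g_{b_n}^{-1}(x_{n+1})$ and produces the $g_{b_n}^{*-1}$ and $\div g_{b_n*}$ terms via \Cref{e:zhao}), and your convex recombination of the two via the $\mathcal{G}_{n+1}$-measurable weight $\alpha_{n+1}$ is the right way to handle the backward-adapted interpolation. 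Your decision to strengthen the inductive hypothesis to $\mathbb{E}[\nu_n \mid \sigma(x_n,\ldots,x_N)] = \nabla \log h_n(x_n)$ is not merely a convenience: it is the form of the lemma that this paper actually invokes downstream, since the proof of \Cref{t:divkernstep} uses $\mathbb{E}[\nabla \log h_{N-1}(x_{N-1}) \mid x_{N-1}, x_N] = \mathbb{E}[\nu_{N-1} \mid x_{N-1}, x_N]$ with the parenthetical remark that the score lemma ``requires conditioning on all steps $\ge N-1$'' --- precisely your $H_{N-1}$. The one point worth making explicit in a final write-up is the conditional-independence step: $C_n$ and $\mathbb{E}[A_n \mid \mathcal{G}_n]$ are functions of $(x_n, x_{n+1})$, and the Markov property of the chain gives that, conditionally on $x_{n+1}$, such functions are independent of $(x_{n+2},\ldots,x_N)$, which is what licenses collapsing $\mathbb{E}[\,\cdot \mid \mathcal{G}_{n+1}]$ to $\mathbb{E}[\,\cdot \mid x_{n+1}]$; you state this but should verify that $b_n$ is indeed recoverable from $(x_n,x_{n+1})$, which here is automatic because $b \mapsto f_n(x_n) + \sigma_n(x_n) b$ is affine and invertible.
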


\begin{theorem} [N-step divergence-kernel formula for linear responses] \label{t:divkernstep}
\begin{equation*} \begin{split}
  \delta \log h^\gamma_N (x_N) 
  = \E{
  \delta \log h^\gamma_0 
  + \sum_{n=0}^{N-1}
  \left( \nu_n - \nabla \log |g_{b_n*}| \right) \cdot \delta x^\gamma_n
  - \delta \log | g^\gamma_{b_n*} | 
  \middle| x_N}.
\end{split} \end{equation*}
where $ \delta x^\gamma_n =- g^{-1}_{b_n*} \delta g^\gamma_{b_n}(x_n).  $
\end{theorem}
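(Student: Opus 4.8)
The plan is to reduce the $N$-step statement to a repeated application of the one-step formula (\Cref{t:div1step}) along the reverse chain, and then to substitute the score estimator of \Cref{t:divkernstep_score}. First I would recast \Cref{t:div1step} in logarithmic form. Writing $\frac{\delta h^\gamma_0 + \nabla_{\delta x^\gamma_0} h_0}{h_0} = \delta\log h^\gamma_0 + \nabla\log h_0\cdot\delta x^\gamma_0$ and likewise $\frac{\delta|g^\gamma_{b_0*}| + \nabla_{\delta x^\gamma_0}|g_{b_0*}|}{|g_{b_0*}|} = \delta\log|g^\gamma_{b_0*}| + \nabla\log|g_{b_0*}|\cdot\delta x^\gamma_0$, the one-step formula becomes
\[
\delta\log h^\gamma_1(x_1) = \E{\delta\log h^\gamma_0 + (\nabla\log h_0 - \nabla\log|g_{b_0*}|)\cdot\delta x^\gamma_0 - \delta\log|g^\gamma_{b_0*}| \middle| x_1},
\]
which is exactly the $N=1$ case once we recall $\nu_0 = \nabla\log h_0(x_0)$.

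Next I would telescope. Nothing in \Cref{t:div1step} uses that step $0$ is special, so the same identity holds for each step $n\to n+1$ with ``initial'' density $h_n$:
\[
\delta\log h^\gamma_{n+1}(x_{n+1}) = \E{\delta\log h^\gamma_n + (\nabla\log h_n - \nabla\log|g_{b_n*}|)\cdot\delta x^\gamma_n - \delta\log|g^\gamma_{b_n*}| \middle| x_{n+1}}.
\]
I would then substitute this recursively into itself, collapsing the nested conditional expectations via the tower property. The needed bookkeeping is the reverse Markov property of \Cref{e:discreteEq}: given $x_{n+1}$, the variable $x_n$ is conditionally independent of $x_{n+2},\ldots,x_N$, so each integrand (a function of $x_n$ and $b_n$, equivalently of $x_n,x_{n+1}$) satisfies $\E{\E{\,\cdot\,|x_{n+1}}\middle|x_N} = \E{\,\cdot\,\middle|x_N}$. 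Unrolling from $n=N-1$ down to $n=0$ yields
\[
\delta\log h^\gamma_N(x_N) = \E{\delta\log h^\gamma_0 + \sum_{n=0}^{N-1}\big[(\nabla\log h_n - \nabla\log|g_{b_n*}|)\cdot\delta x^\gamma_n - \delta\log|g^\gamma_{b_n*}|\big] \middle| x_N},
\]
i.e. the claimed formula but with the true scores $\nabla\log h_n(x_n)$ in place of the estimators $\nu_n$. The terms $\nabla\log|g_{b_n*}|\cdot\delta x^\gamma_n$ and $\delta\log|g^\gamma_{b_n*}|$ are functions of $(x_n,x_{n+1})$ only and pass through unchanged.

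The final step is to replace $\nabla\log h_n(x_n)$ by $\nu_n$. Since $\delta x^\gamma_n$ is $\sigma(x_n,x_{n+1})$-measurable, conditioning on the backward $\sigma$-algebra $\mathcal B_n := \sigma(x_n,\ldots,x_N)$ gives
\[
\E{\nu_n\cdot\delta x^\gamma_n \middle| x_N} = \E{\E{\nu_n\middle|\mathcal B_n}\cdot\delta x^\gamma_n \middle| x_N},
\]
so it suffices that $\E{\nu_n\mid\mathcal B_n} = \nabla\log h_n(x_n)$. This is stronger than the bare statement $\E{\nu_n\mid x_n}=\nabla\log h_n$ recorded in \Cref{t:divkernstep_score}, and I would extract it from the backward-adapted construction itself: the divergence increment $g_{b_{n-1}}^{*-1}(\nu_{n-1}-\div g_{b_{n-1}*})$ and the kernel increment $\nabla\log k(b_{n-1})/\sigma_{n-1}$ are designed to share the same conditional mean $\nabla\log h_n(x_n)$ given $\mathcal B_n$, and since $\alpha_n$ is backward-adapted (hence $\mathcal B_n$-measurable), the $\alpha_n$-combination defining $\nu_n$ inherits that conditional mean for every choice of $\alpha$.

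The main obstacle is precisely this last point: verifying that the estimator remains unbiased after conditioning on the entire future $\mathcal B_n$, and not merely on $x_n$. This is the conditional-on-future invariant that underlies \Cref{t:divkernstep_score}, and once it is in hand the substitution is immediate and the theorem follows. The logarithmic rewriting and the telescoping are routine by comparison, with the reverse-Markov application of the tower property the only other place demanding attention.
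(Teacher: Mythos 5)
Your proposal is correct and follows essentially the same route as the paper: rewrite \Cref{t:div1step} in logarithmic form, telescope via the tower property and the Markov structure, and replace the true score by the estimator $\nu_n$ using its conditional unbiasedness given the backward $\sigma$-algebra (the paper performs this substitution step by step rather than after fully unrolling, but the content is identical). The point you flag as the main obstacle --- that one needs $\E{\nu_n \mid x_n,\ldots,x_N} = \nabla\log h_n(x_n)$ rather than conditioning on $x_n$ alone --- is exactly what the paper acknowledges in its parenthetical ``note that it requires conditioning on all steps $\ge N-1$'' and likewise imports from \cite{divKer} without further proof.
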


\begin{proof}
By \Cref{t:div1step}, denote $ \delta x^\gamma_{N-1} :=- g^{-1}_{b_{N-1}*} \delta g^\gamma_{b_{N-1}}(x_{N-1})$,
\begin{equation*} \begin{split}
  \delta \log h^\gamma_N (x_N) 
  = \E{
  \delta \log h^\gamma_{N-1} 
  + \nabla_{\delta x^\gamma_{N-1}} \log h_{N-1}
  - \delta \log | g^\gamma_{b_{N-1}*} | 
  - \nabla_{\delta x^\gamma_{N-1}} \log |g_{b_{N-1}*}|
  \middle| x_N},
\end{split} \end{equation*}
where all quantities are evaluated at $x_{N-1}$.

To handle the term involving $\nabla \log h_{N-1}$, note that it does not depend on $x_N$, so
\begin{equation*}\begin{split}
  \E{\nabla_{\delta x^\gamma_{N-1}} \log h_{N-1} (x_{N-1}) \middle| x_N}
  = 
  \E{\E{\nabla \log h_{N-1} (x_{N-1}) \middle| x_{N-1},x_N}\cdot \delta x^\gamma_{N-1} \middle| x_N}.
\end{split}\end{equation*}
use \Cref{t:divkernstep_score} (note that it requires conditioning on all steps $\ge N-1$),
\begin{equation*}\begin{split}
  \E{\nabla \log h_{N-1} (x_{N-1}) \middle| x_{N-1},x_N}
  = 
  \E{\nu_{N-1} \middle|  x_{N-1},x_N}.
\end{split}\end{equation*}
Since $\delta x^\gamma_{N-1}$ is measurable with respect to $(x_{N-1}, x_N)$, 
\begin{equation*}\begin{split}
  \E{\nabla_{\delta x^\gamma_{N-1}} \log h_{N-1} (x_{N-1}) \middle| x_N}
  = \E{
  \E{\nu_{N-1}\cdot 
  \delta x^\gamma_{N-1} \middle| x_{N-1}, x_N}\middle| x_N}
  =
  \E{\nu_{N-1} \cdot \delta x^\gamma_{N-1} \middle| x_N}.
\end{split}\end{equation*}
By substitution,
\begin{equation*} \begin{split}
  \delta \log h^\gamma_N (x_N) 
  = \E{
  \delta \log h^\gamma_{N-1} 
  - \delta \log | g^\gamma_{b_{N-1}*} | 
  + \left( \nu_{N-1} - \nabla \log |g_{b_{N-1}*}| \right)
  \cdot \delta x^\gamma_{N-1}
  \middle| x_N}.
\end{split} \end{equation*}
Repeat this on $\delta \log h^\gamma_n (x_n)$ for each $n$ to prove the lemma.
\end{proof}

Note that we need the score of $h_0$, or $\nu_0$.
Only for large $T$ and systems with fast decorrelation, we can use any $\nu_0$ and the impact would be negligible.
See \Cref{t:ergodic} for more details.

\subsection{Preparations for SDEs} \label{s:divcts_prep}

This subsection introduces some notation for time-discretized SDEs and recalls some useful facts from \cite{divKer} about scores for SDEs.
We want to \textit{formally} pass the discrete-time results to the continuous-time limit SDE,
\begin{equation} \begin{split} \label{e:sde}
dx^\gamma_t = F^\gamma_t(x^\gamma_t) dt + \sigma^\gamma_t(x^\gamma_t)dB_t,
\end{split} \end{equation}
where $B$ denotes a standard Brownian motion.
All SDEs in this paper are in the Ito sense, so the discretized version takes the form of \Cref{e:discreteSDE}.
Denote the density of $x_t$ by $h_t$.
Let $\cF_t$ be the $\sigma$-algebra generated by $\{x_\tau\}_{\tau\le t}$.

Our derivation is performed on the time span divided into small segments of length $\Dt$.
Let $N$ be the total number of segments, so $N\Dt = T$.
Denote
\[ 
  \DB_n:= B_{n+1} - B_n.
\]
Denote $(\cdot)_n = (\cdot)_{n\Dt}$.
The time-discretized SDE is 
\begin{equation} \label{e:discreteSDE}
x^\gamma_{n+1} - x^\gamma_{n}
=F^\gamma(x^\gamma_n) \Dt + \sigma^\gamma(x^\gamma_n)\DB_n.
\end{equation}

This section derives score formulas for time-discretized SDEs, then formally pass to the continuous-time limit.
Comparing the notations, we have
\begin{equation}\begin{split}
\label{e:lifu}
  f^\gamma_n(x) := x+ F^\gamma_n(x)\Dt,
  \quad 
  b_n := \DB_n,
  \quad 
  \\
  k(b) := \left(2\pi \sqrt \Dt \right)^{-M/2} \exp{ -\frac {\ip{b,b}}{2\Dt}} ,
  \quad 
  \nabla \log k \left(\DB_n\right)
  = - \frac {\DB_n}{\Dt},
  \\
  g^\gamma_{b_n}(x_n) 
  := x_n + F^\gamma_n(x_n)\Dt + \sigma^\gamma_n(x_n) \DB_n,
  \\
  g^\gamma_{b_n *}(x_n) 
  = I + \nabla F^\gamma_n(x_n)\Dt +  \DB_n \nabla \sigma_n^{\gamma T} (x_n).
\end{split}\end{equation}
Here $\nabla F$ and $\DB \nabla \sigma^T$ are matrices, whose $i$-th row $j$-th column are
\begin{equation*}\begin{split}
  \left[\nabla F \right]_{ij}
  = \pp {F^i} {x^j}
  =: \partial_j F^i,
\quad \quad
  \left[\DB \nabla \sigma^T \right]_{ij}
  = \pp {\sigma} {x^j} \DB^i
  =: \partial_j \sigma \DB^i.
\end{split}\end{equation*}

We keep only terms of order $\Dt$ in expectation and drop higher-order terms since they integrate to zero; this is standard in Ito calculus.
Ignoring terms that go to zero in the continuous-time limit, \cite{divKer} derived explicit expressions of some terms for time-discretized SDEs,
\begin{equation}\begin{split}\label{e:haiyu}
  \left| g^\gamma_{b_n*} \right|
  \approx
  1 + \sum_{i=1}^M \left(\partial_i F_n^{\gamma i} \Dt + \partial_i \sigma_n^\gamma \DB^i\right)
  =
  1 +  \div F_n^\gamma \Dt 
  + \nabla \sigma_n^\gamma \cdot \DB,
\\
  \div g_{b_n *} (x_n)
  :=
  \nabla \log \left|  g_{b*} \right|
  \approx 
  \nabla \div F_n(x_n) \Dt 
  + \nabla^2 \sigma_n(x_n) \DB
  - \nabla^2 \sigma_n \nabla \sigma_n (x_n) \Dt,
\end{split}\end{equation}
where $ \nabla^2 \sigma \DB = \sum_i \nabla \partial_i \sigma(x_n) \DB_n^i$,
$\nabla^2 \sigma \nabla \sigma \Dt = \sum_i \nabla \partial_i \sigma(x_n) \partial_i \sigma(x_n) \Dt$.
This paper also needs expressions of some other terms, which will be given in \Cref{s:divkerCts}.
Then we recall the expression of the score function at $\gamma=0$.

\begin{formula} [divergence-kernel formula for score of SDEs \cite{divKer}] \label{t:div_sde}
For the SDE in $\R^M$,
\[
dx_t = F_t(x_t) dt + \sigma_t(x_t)dB_t,
\]
For any backward adapted process $\{ \alpha_t \}_{ t=0 }^T$, let $\{ \nu_t \}_{ t=0 }^T$ be the forward covector process
\begin{equation*}\begin{split}
  d \nu
  = \left( 
  (\nabla \sigma \nabla \sigma^T 
  - \nabla F^T - \alpha)\nu 
  - \nabla \div F 
  + \nabla^2 \sigma \nabla \sigma 
  + \nabla \sigma \Delta \sigma \right) dt
  - 
  (\nabla \sigma \nu^T 
  + \nabla^2 \sigma 
  + \alpha \sigma^{-1}) dB
  ,
\end{split}\end{equation*}
with initial condition $ \nu_0 = \nabla \log h_0(x_0)$,
where $h_t$ is the probability density of $x_t$, 
$\Delta \sigma$ is the Laplacian of the scalar-valued function $\sigma$.
Then
\begin{equation*}\begin{split}
  \nabla \log h_T (x_T) 
  = \E{\nu_T  \middle| x_T}.
\end{split}\end{equation*}
\end{formula}

Here, the SDE for $\nu$ is the limit of 
\begin{equation}\begin{split}
\label{e:backito}
  \Delta \nu_n 
  \approx
  \left( 
  (\nabla \sigma \nabla \sigma^T 
  - \nabla F^T -\alpha_{n+1} )\nu 
  - \nabla \div F 
  + \nabla^2 \sigma \nabla \sigma 
  + \nabla \sigma \Delta \sigma 
  \right) \Dt
  \\
  - (\nabla \sigma \nu^T 
  + \nabla^2 \sigma 
  + \alpha_{n+1} \sigma^{-1} ) \DB
\end{split}\end{equation}
as $\Dt\rightarrow 0$.
Here $\Delta \nu_n:= \nu_{n+1} - \nu_n$;
$\alpha_{n+1}$ is evaluated at step $n+1$; all other terms are evaluated at step $n$ and location $x_n$.

\subsection{Continuous-time divergence-kernel formula for linear response} 
\label{s:divkerCts}

First, we give an expression of $\delta \log | g^\gamma_{b_n*} |$, then we derive the linear response formulas.

\begin{formula}\label{t:delta_log_g}
Ignoring high-order terms which vanish as $\Dt\rightarrow 0$,
\begin{equation*}\begin{split}
  \delta \log | g^\gamma_{b_n*} | 
  \approx
  \div \delta F_n^\gamma \Dt + \nabla \delta \sigma_n^\gamma \cdot \DB_n
  - \nabla \sigma_n^T \nabla \delta \sigma_n^\gamma \Dt. 
\end{split}\end{equation*}
\end{formula}

\begin{derivation}
Substitute the expression in \Cref{e:haiyu}, we get
\begin{equation*}\begin{split}
  \delta \log | g^\gamma_{b_n*} | 
  = \frac{\delta \left| g^\gamma_{b*} \right|}{\left| g_{b*} \right|} 
  = \frac{\delta \div F^\gamma \Dt + \delta \nabla \sigma^\gamma \cdot \DB }
  {1 +  \div F \Dt + \nabla \sigma\cdot \DB } .
\end{split}\end{equation*}
Recall that $1/(1+\eps) = 1 - \eps +O(\eps^2)$, ignoring high-order terms, we get
\begin{equation*}\begin{split}
  \delta \log | g^\gamma_{b_n*} | 
  \approx 
  \left(
  \div \delta F^\gamma \Dt + \nabla \delta \sigma^\gamma \cdot \DB
  \right)
  \left(
  {1 -  \div F \Dt - \nabla \sigma\cdot \DB}
  \right)
  \\
  \approx
  \div \delta F^\gamma \Dt + \nabla \delta \sigma^\gamma \cdot \DB
  - (\nabla \delta \sigma^\gamma \cdot \DB) (\nabla \sigma \cdot \DB)
\end{split}\end{equation*}

For the last term, ignore cross terms ($\DB^i\DB^j$ with $i\neq j$) and apply $\DB^i \DB^i \approx \Dt$, 
\begin{equation*}\begin{split}
 (\nabla \delta \sigma^\gamma \cdot \DB) (\nabla \sigma \cdot \DB)
  = \sum_i \sum_j \partial_i \delta \sigma^\gamma \DB^i \partial_j \sigma \DB^j
  \approx 
  \sum_i \partial_i \delta \sigma^\gamma \partial_i \sigma \Dt
  = \nabla \sigma^T \nabla \delta \sigma^\gamma   \Dt .
\end{split}\end{equation*}
Then substitute to derive the formula.
\end{derivation}

\begin{formula} [expression of $\delta x^\gamma_n$] \label{t:deltax}
Ignoring high-order terms which vanish as $\Dt\rightarrow 0$,
\begin{equation*}\begin{split}
  \delta x^\gamma_n 
  = - g^{-1}_{b_n*} \delta g^\gamma_{b_n}(x_n)
  \approx 
  - \delta F^\gamma_n\Dt - \delta \sigma^\gamma_n \DB_n
  + \delta \sigma^\gamma_n \nabla \sigma_n \Dt.
\end{split}\end{equation*}
\end{formula}

\begin{remark*}
    This is one of the examples where some terms in the SDE setting are more explicit than discrete-time systems.
    For a discrete-time system, this term would involve solving a $M$-dimensional linear equation system.
\end{remark*}

\begin{derivation}
By \Cref{e:lifu},
\begin{equation*}\begin{split}
  g^\gamma_{b_n}(x_n) 
  := x_n + F^\gamma_n(x_n)\Dt + \sigma^\gamma_n(x_n) \DB_n,
  \\
  g_{b_n *}(x_n) 
  = I + \nabla F_n(x_n)\Dt +  \DB_n \nabla \sigma_n^T (x_n).
\end{split}\end{equation*}
Hence,
\begin{equation*}\begin{split}
  \delta g^\gamma_{b_n}(x_n) 
  = \delta F^\gamma_n(x_n)\Dt + \delta \sigma^\gamma_n(x_n) \DB_n
  \sim O(\Dt^{0.5}).
\end{split}\end{equation*}
For a small matrix $\eps\sim O(\Dt^{0.5})$, $(I+\eps)^{-1} = I - \eps + O(\Dt)$, so
\begin{equation*}\begin{split}
  g_{b_n*}^{-1}
  \approx 
  I- \nabla F_n \Dt -\DB_n \nabla \sigma_n^T.
\end{split}\end{equation*} 
Hence, ignoring $O(\Dt^{1.5})$ terms which vanish as $\Dt\rightarrow 0$,
\begin{equation*}\begin{split}
  \delta x^\gamma_n 
  = - g_{b_n*}^{-1} \delta g^\gamma_{b_n}(x_n)
  \approx 
  - (I - \nabla F_n \Dt -\DB_n \nabla \sigma^T_n)
  ( \delta F^\gamma_n\Dt + \delta \sigma^\gamma_n \DB_n)
  \\
  \approx 
  - \delta F^\gamma_n\Dt - \delta \sigma^\gamma_n \DB_n
  + \DB_n \nabla \sigma^T_n \delta \sigma^\gamma_n \DB_n
\end{split}\end{equation*} 
Note that $ \nabla \sigma^T_n \DB_n $ is a number, and ignoring the cross terms, we get
\begin{equation*}\begin{split}
  \DB_n (\nabla \sigma^T_n \DB_n)
  = \DB_n (\DB_n^T \nabla \sigma_n)
  \approx \nabla \sigma_n \Dt.
\end{split}\end{equation*}
Then substitute to prove the lemma.
\end{derivation}

\begin{formula} \label{t:yong}
Ignoring high-order terms which vanish as $\Dt\rightarrow 0$,
let $\Delta \sigma$ denote the Laplacian of $\sigma$,
then
\begin{equation*}\begin{split}
  \nabla \log \left|  g_{b_n*} \right|
  \cdot \delta x^\gamma_n 
  \approx 
  - \delta \sigma^\gamma_n \Delta \sigma_n \Dt.
\end{split}\end{equation*}
\end{formula}

\begin{derivation}
By \Cref{e:haiyu} and \Cref{t:deltax},
\begin{equation*}\begin{split}
  \nabla \log \left|  g_{b_n*} \right|
  \approx 
  \nabla \div F_n(x_n) \Dt 
  + \nabla^2 \sigma_n(x_n) \DB_n
  - \nabla^2 \sigma_n \nabla \sigma_n (x_n) \Dt,
  \\
  \delta x^\gamma_n 
  = - g^{-1}_{b_n*} \delta g^\gamma_{b_n}(x_n)
  \approx 
  - \delta F^\gamma_n\Dt - \delta \sigma^\gamma_n \DB_n
  + \delta \sigma^\gamma_n \nabla \sigma^T_n \Dt.  
\end{split}\end{equation*}
In the product, the only term smaller than or equal to $O(\Dt)$ is 
\begin{equation*}\begin{split}
  \nabla \log \left|  g_{b_n*} \right|
  \cdot \delta x^\gamma_n 
  \approx 
  \nabla^2 \sigma_n(x_n) \DB_n
  \cdot 
  - \delta \sigma^\gamma_n \DB_n
\end{split}\end{equation*}
Ignore the cross term, we get
\begin{equation*}\begin{split}
  \nabla^2 \sigma \DB \cdot \DB
  = \sum_{i,j} \partial_i\partial_j \sigma \DB^j \DB^i 
  \approx \sum_i \partial_i^2 \sigma \Dt
  =: \Delta \sigma \Dt ,
\end{split}\end{equation*}
Then substitute to prove the lemma.
\end{derivation}

With these, we can formally pass \Cref{t:divkernstep} to the continuous-time limit to derive the main formula of this paper.

\begin{derivation}{(of \Cref{t:divker_sde})}
Substitute \Cref{t:delta_log_g,t:deltax,t:yong} into \Cref{t:divkernstep}, we get
\begin{equation} \begin{split} \label{e:siye}
  \delta \log h^\gamma_N (x_N) 
  = 
  \E{
  \delta \log h^\gamma_0 
  + \sum_{n=0}^{N-1}
  \nu_n \cdot \delta x^\gamma_n 
  - \nabla \log |g_{b_n*}| \cdot \delta x^\gamma_n 
  - \delta \log | g^\gamma_{b_n*} | 
  \middle| x_N}
  \\
  \approx
  \mathbb{E}\left[
  \delta \log h^\gamma_0 
  + \sum_{n=0}^{N-1}
  \nu_n \cdot (- \delta F^\gamma_n\Dt - \delta \sigma^\gamma_n \DB_n + \delta \sigma^\gamma_n \nabla \sigma_n \Dt)
  \right.
  \\
  \left.
  + \delta \sigma^\gamma_n \Delta \sigma_n \Dt
  - \div \delta F_n^\gamma \Dt 
  - \nabla \delta \sigma_n^\gamma \cdot \DB_n
  + \nabla \sigma_n^T \nabla \delta \sigma_n^\gamma \Dt
  \middle| x_N \right].
\end{split} \end{equation}
Then formally pass to the limit $\Dt\rightarrow 0$.
\end{derivation}

We also formally pass to the infinite-time limit of stationary distributions $h^\gamma$.
We need to further assume that $F_t$ and $\sigma_t$ are stationary, and that there is $T'$ such that $\alpha_t$ only depends on $x_t\sim x_{T'}$, so $(\alpha, \nu, x)$ are jointly stationary.
\begin{formula} [ergodic divergence-kernel linear response formula] \label{t:ergodic}
For any $C^2$ observable function $\Phi:\R^M\rightarrow R$, let $h^\gamma$ be the stationary measure under parameter $\gamma$, then the linear response of the averaged observable can be expressed almost surely on one path:
\begin{equation*} \begin{split}
  \delta \int \Phi(x) h^\gamma(x) dx
  \overset{a.s.}{=}
  \lim_{W\rightarrow\infty}
  \lim_{T\rightarrow\infty}
  \frac 1T
  \int_{t=0}^{T}
  \left(
  \int_{\tau=0}^{W}
  (\Phi(x_{t+\tau})-\Phi_{avg}) d\tau
  \right)
  (\delta \sigma_t^\gamma \nabla \sigma_t^T \nu 
  \\
  - \delta F_t^{\gamma T} \nu
  - \div \delta F_t^\gamma 
  + \delta \sigma_t^\gamma \Delta \sigma_t 
  + \nabla \sigma_t^T \nabla \delta \sigma_t^\gamma ) dt
  - 
  (\delta \sigma_t^\gamma \nu^T
  + \nabla \delta \sigma_t^{\gamma T}) dB_t,
  \\
  \textnormal{where} \quad 
  \Phi_{avg}:= \int \Phi(x) h(x)
  \overset{a.s.}{=}
  \lim_{T\rightarrow\infty} \frac 1T
  \int_{t=0}^{T} \Phi(x_{t}) dt.
  \\ 
\end{split} \end{equation*}
\end{formula}

The discrete-time expression is
\begin{equation*}\begin{split}
  \delta \int \Phi(x) h^\gamma(x) dx
  \overset{a.s.}{=}
  \lim_{W\rightarrow\infty}
  \lim_{N\rightarrow\infty}
  \frac 1N
  \sum_{n=0}^{N}
  \left(
  \sum_{m=1}^{W}
  (\Phi(x_{n+m})-\Phi_{avg})
  \right)
  \nu_n \cdot \delta \sigma^\gamma_n \nabla \sigma_n \Dt
  \\ 
  -  \nu_n \cdot( \delta F^\gamma_n\Dt + \delta \sigma^\gamma_n \DB_n )
  + \delta \sigma^\gamma_n \Delta \sigma_n \Dt
  - \div \delta F_n^\gamma \Dt 
  - \nabla \delta \sigma_n^\gamma \cdot \DB_n
  + \nabla \sigma_n^T \nabla \delta \sigma_n^\gamma \Dt.
\end{split}\end{equation*}

\subsection{Discussions}
\label{s:howtouse}

The discussions in \cite{divKer} are still valid.
Roughly speaking, we want $\alpha$ larger than the largest contraction rate.
Also, the divergence-kernel method does not work well in highly contracting systems with little to no noise, the case where the path-perturbation method is good.
We may need to use all three methods (path, kernel, and divergence) together to obtain an ultimate solution, as proposed in \cite{Ni_kd}.

The divergence-kernel method contains second derivatives.
To speed up our code, note that we do not really need the matrices such as $\nabla^2 \sigma$ and $\nabla F^T$, we only need their products with certain vectors.
Such matrix-vector products can be implemented much more efficiently in high dimensions (such as the jvp function in JAX) than forming the matrix, especially with sparse structures.

Compared to the path-kernel method in \cite{dud,apk}, the divergence-kernel method has the following advantages:
\begin{enumerate}
  \item It involves only forward processes, so the memory cost is much smaller.
  \item Its cost is almost independent of \textit{both} the number of observables \textit{and} the parameters.
  In contrast, the cost of the path-kernel method is independent of \textit{either} the number of observables \textit{or} the parameters.
\end{enumerate}
It also has some disadvantages:
\begin{enumerate}
  \item Its computational complexity per step is higher than that for the path-kernel method, although still on the same order as that for the path-kernel method. This is because it involves the second-order derivative.
  \item For dynamical systems generated from discretized PDEs, typically $|\lambda_{min}|>|\lambda_{max}|$, where $\lambda_{max}$ and $\lambda_{min}$ are the largest and the smallest (most negative) Lyapunov exponents; hence, the path-kernel method requires a smaller $\alpha$, so the estimator has smaller size and the average converges faster.
\end{enumerate}

\section{Numerical examples for linear responses}
\label{s:numeric}

\subsection{1-dimensional process with multiplicative noise}
\label{s:eg_1dim}

We use \Cref{t:divker_sde} to compute the linear response of the one-dimensional process with multiplicative noise. The SDE is
\begin{equation*}\begin{split}
  dx = \beta_t (\gamma-x) dt + \sqrt{\beta_t} \left(0.5 + \exp {(\gamma-x)^2} \right) dB,
  \quad \textnormal{where} \quad 
  \beta_t:= 1 + 3t.
\end{split}\end{equation*}
The initial distribution is $x_0\sim N(0,1)$. 
Set $T = 1$, $\Dt=0.01$, $\alpha=10$.
Note that the problem is symmetric for $\gamma$.

We are only interested in terminal states $x_T$ in the interval $[-2,2]$, which is further partitioned into $10$ subintervals.
We simulate many sample paths, then count the number of paths whose $x_T$ fall into a specific subinterval, to obtain the empirical distribution of $x_T$ on subintervals.
Then we compute the linear response $\delta \log h_T$ by averaging the divergence-kernel formula over these paths, and see if our results reflect the trend of $\log h_T$ from the empirical distribution.
We also use the average of $\nu_T$ to compute the scores $\nabla \log h_T$. 
Each subinterval contains at least 180 samples.
The $\delta \log h_T$ and $\nabla \log h_T$ computed for different $\gamma$ and $x$ are shown in \Cref{f:1d_ker}.
Both correctly reflect the trend of $\log h_T$.

\begin{figure}[ht] \centering
  \includegraphics[width=0.44\textwidth]{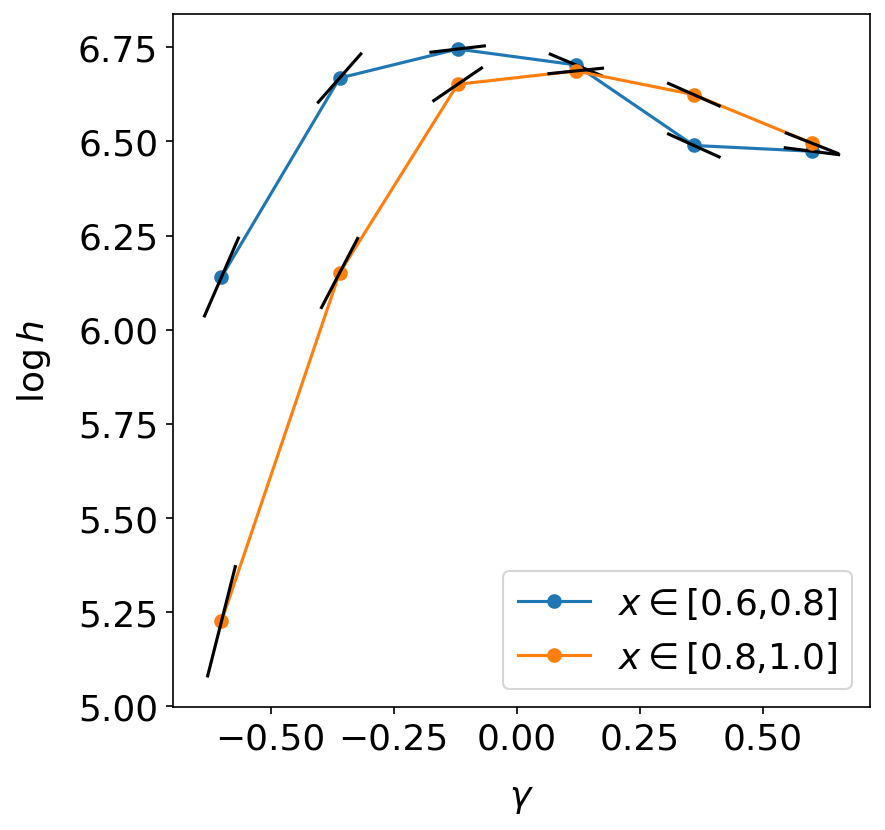}
  \includegraphics[width=0.45\textwidth]{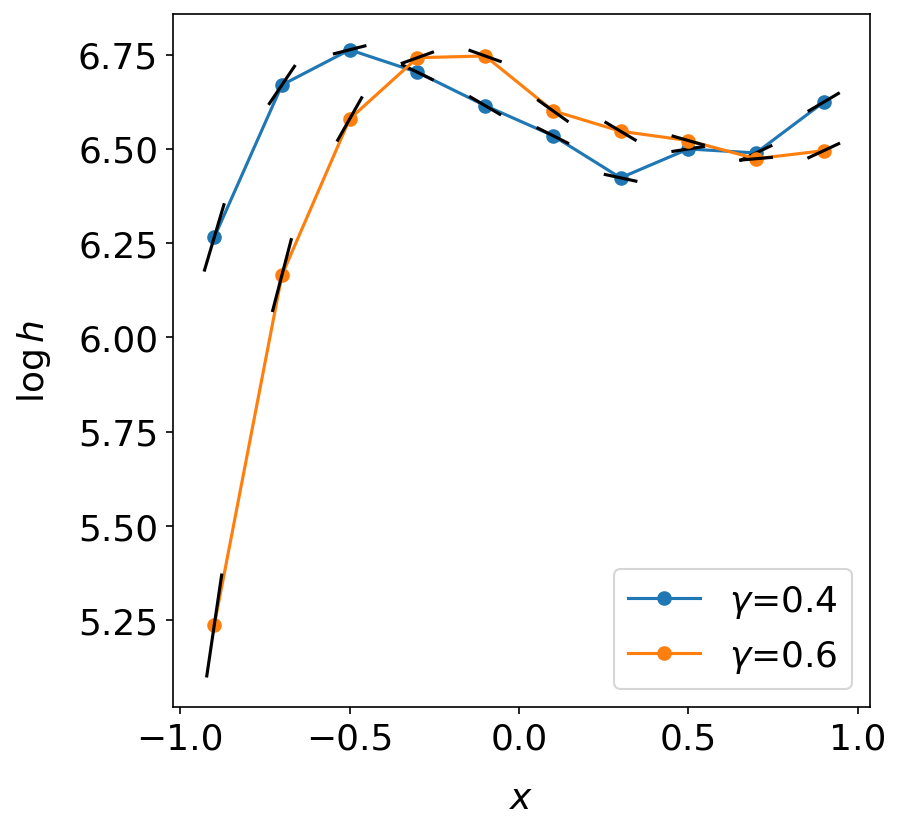}
  \caption{
  Divergence-kernel method for the linear responses and scores of the 1-dimensional SDE with multiplicative noise, $T=1$.
  The dots are $\log h_T$.
  Each short line is a linear response or score computed by the divergence-kernel algorithm, averaged on paths whose terminal states $x_T$ fall into the same subinterval.
  Left: linear responses $\delta \log h_T$.
  Right: scores $\nabla \log h_T$ .
  }
  \label{f:1d_ker}
\end{figure}

\subsection{40-dimensional Lorenz 96 system with multiplicative noise}
\label{s:eg_lorenz}

We use \Cref{t:ergodic} to compute the linear response of the stationary density of the Lorenz 96 model \cite{Lorenz96}.
The dimension of the system is $M=40$.
The SDE is
\begin{eqnarray*}
  d x^i
  = \left( \left(x^{i+1}-x^{i-2}\right) x^{i-1} - x^i + 8 +\gamma - 0.01 (x^i)^2 \right) dt + \left( 0.5+\exp{|x-\gamma|^2} \right) \, dB^i,
  \\
  \textnormal{where} \quad
  i=0, \ldots, M-1;
  \quad
  x_0 \sim \cN (0, I).
\end{eqnarray*}
Here $i$ labels different directions in $\R^M $, and We use cyclic indexing modulo $M$.
We added noise and the $ - 0.01 (x^i)^2$ term, which prevents the noise from carrying us to infinitely far away.
We choose the observable function to be
\[
  \Phi(x) := \frac 1M |x|^2.
\]

We use the Euler integration scheme with $\Dt=0.002$.
A typical orbit is in \Cref{f:lorenz}.
Then we use the ergodic linear response formula \Cref{t:ergodic} to compute the linear response of the observable averaged on the stationary measure.
The averaged observable $\Phi_{avg}$ and the linear response $\delta \int \Phi h^\gamma$ are averaged over 7 orbits of length $T=400$.
The decorrelation time is $W=1.5$.
The results shown in \Cref{f:lorenz}; the linear response computed using the divergence-kernel method is correct.

\begin{figure}[ht] \centering
\includegraphics[height=0.43\textwidth]{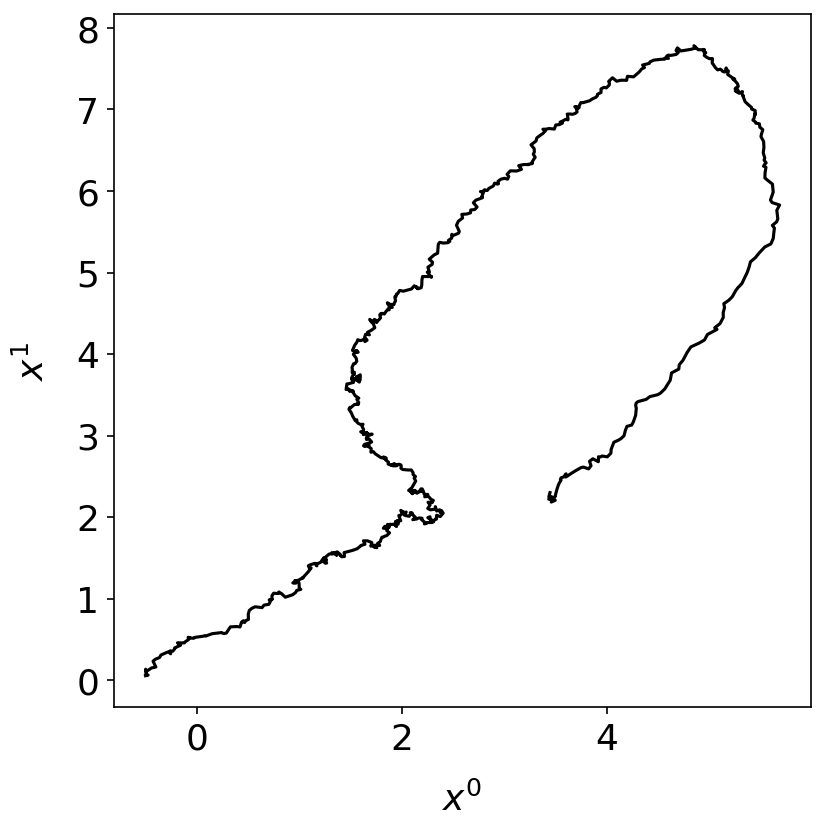} \hfill
\includegraphics[height=0.43\textwidth]{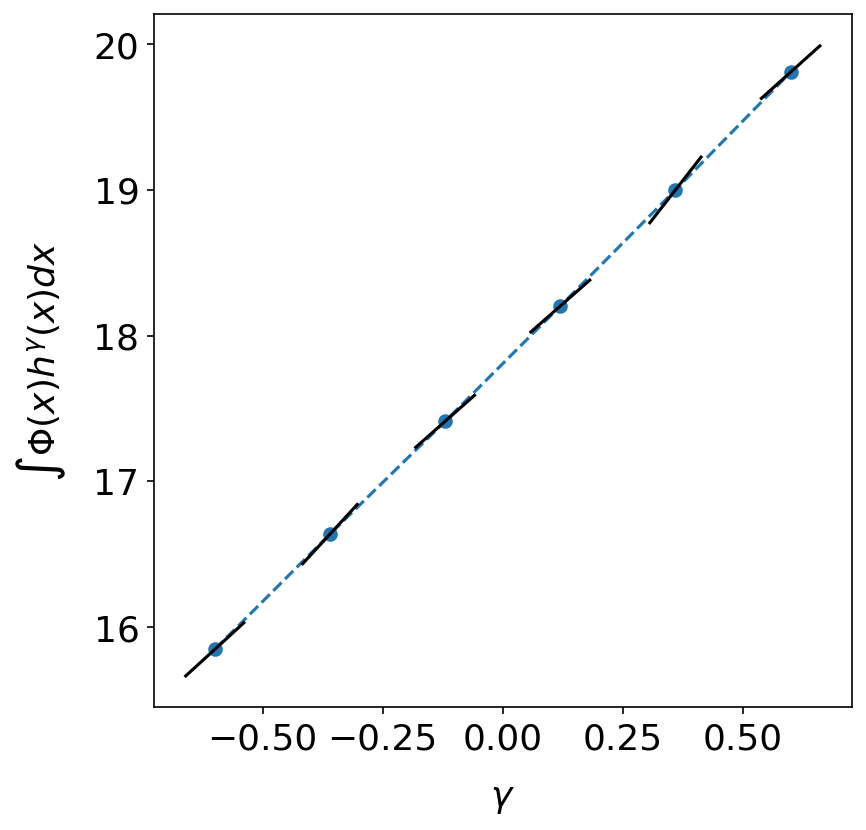}
  \caption{
  Left: plot of $x^0_t, x^1_t$ from a typical orbit of time length $1.5$. 
  Right: linear response computed by divergence-kernel method.
  }
  \label{f:lorenz}
\end{figure}

\section{Parametric SDE generative model trained by the divergence-kernel method (DK-SDE)}
\label{s:diffmodel}

We consider the problem in which we are given $N_{\mathrm{data}}$ many samples $y_k\in \R^{M}$, whose true distribution is unknown. 
Then we use the divergence-kernel method to compute the gradient, and use gradient descent to train a parametric generative SDE model which maps the Gaussian distribution to match the data.

\subsection{Problem and algorithm description}

We describe the problem and our model, then we define loss function and explain how to compute its gradient, and how to update the parameters by gradient descent.

First, we describe how to generate samples.
Note that the information in this paragraph will not be available when we train the model.
The samples are generated by the SDE
\begin{equation*}\begin{split}
  dx_t = F^{\gamma_{true}}_t(x_t) dt + \sigma^{\gamma_{true}}_t(x_t)dB_t,
\end{split}\end{equation*}
where the true parameters $\gamma_{true}:=[\gamma_{true} ^0,\ldots, \gamma_{true}^{N_\gamma-1}]$.
The samples are $y:=x_T$ obtained at the end of independent paths.
Denote the target distribution (the one we try to match) as $P$; when the target is empirical distribution prescribed by a set of data, $P$ is a sum of Dirac delta functions.
Our framework can work on more general data not necessarily generated by an SDE, but here we restrict ourselves to this simple case, so that after optimization, we can check our result with the true parameters.

We want to fit several unknown parameters $\gamma=[\gamma ^0,\ldots, \gamma^{N_\gamma-1}]$ in the generative SDE model such that its marginal distribution matches the data.
The prototype of the SDE is
\begin{equation*}\begin{split}
  dx_t = F^{\gamma}_t(x_t) dt + \sigma^{\gamma}_t(x_t)dB_t.
\end{split}\end{equation*}
Denote the density at $T$ by $h^\gamma$ and measure by $H^\gamma$ -- this is the distribution that we want to optimize.

In the current paper, we use simple models, such as one based on the Lorenz 96 system.
These simple models can be interpreted as the result of the \textit{prior} knowledge we had about how the data was generated. 
Such prior knowledge can be naturally incorporated into the SDE model.
In the future, we will test on problems with less prior knowledge, so we will need more flexible models to express $F$ and $\sigma$, such as neural networks.

Also, note that the gradient used in the optimization is \textit{not} computed by backpropagation; rather, it only involves a covector process that runs forward in time and uses much less memory.
We also remark that we can choose to optimize the stationary distribution using \Cref{t:ergodic}, which may be more efficient for some cases.

\subsection{KL divergence as loss and gradient descent}

Our training loss is the KL divergence, but we never need to evaluate the KL-divergence $D_{\mathrm{KL}}(P\|H^\gamma)$; we only need its parameter-gradient.
When $P$ admits a density $p$ (w.r.t.\ Lebesgue measure), the KL-divergence is
\[
D_{\mathrm{KL}}(P\| H^\gamma)=\int p(y) \log\frac{p(y)}{h^\gamma(y)}\, dy
= \underbrace{\int p(y)\log p(y)\,dy}_{\text{independent of }\gamma}
-\int p(y)\log h^\gamma(y)\,dy.
\]
Hence, its parameter-derivative is
\[
  \delta D_{\mathrm{KL}}(P\| H^\gamma)
  = - \int_{\R^M}
  p(y) \delta \log h^\gamma (y) \,dy.
\]
If $P$ is given only through samples $\{y_k\}_{k=1}^{N_{\mathrm{data}}}\sim P$ (or is strictly singular), we optimize the same $\gamma$-dependent term (cross-entropy), namely
\[
\mathcal L(\gamma):=-\frac1{N_{\mathrm{data}}}\sum_{k=1}^{N_{\mathrm{data}}}\log h^\gamma(y_k),
\qquad
\delta \mathcal L(\gamma)= -\frac1{N_{\mathrm{data}}}\sum_{k=1}^{N_{\mathrm{data}}} \delta \log h^\gamma(y_k).
\]
We can compute the cross-entropy, but there is little benefit in doing so here, and it would require a few additional steps.
In this paper, we judge the accuracy of the trained model by the discrepancy between the trained parameters and the ground-truth parameters; this criterion is complementary to the KL/cross-entropy.

Then we need to decide how to compute $\delta \log h^\gamma$ at the prescribed locations $y_k$.
It is difficult to obtain paths ending exactly at $y_k$, so we need an approximation.
For each $\gamma$ during optimization, we simulate $L$-many paths by this SDE, and we label each path by $l$, in particular, the terminal location is denoted by $x_{T,l}$. 
We approximate $y_k$ by its $N_{\mathrm{neighbor}}$-nearest neighbors in $\{x_{T,l}\}_{l=1}^L$, and therefore $\delta \log h^\gamma (y_k)$ is approximated by the average of the divergence-kernel expression in \Cref{t:divker_sde} over those $N_{\mathrm{neighbor}}$ paths.
Hence,
\begin{equation}\begin{split}
\label{e:shui}
  \delta D_{\mathrm{KL}}(P\| H^\gamma)
  \approx
  \hat \delta D_{\mathrm{KL}}(P\| H^\gamma)
  := -
  \frac 1{N_{\mathrm{data}}} \sum_{k=1}^{N_{\mathrm{data}}}
  \frac 1{N_{\mathrm{neighbor}}} \sum_{x_{T,l} \in N(y_k)} \cT (x_{T,l}),
\end{split}\end{equation}
where $N(y_k)$ is the neighbors of $y_k$, and $\cT(x_{T,l})$ is the expression in \Cref{t:divker_sde} on the path ending at $x_{T,l}$.

After approximately computing the parameter-derivative, we update the parameter by gradient descent,
\begin{equation*}\begin{split}
  \gamma_{n+1}
  =
  \gamma_n - \eta \hat \delta D_{\mathrm{KL}}(P\| H^\gamma).
\end{split}\end{equation*}
For simplicity, we keep $\eta$ constant, but we can also change it after each update of $\gamma$.

We can give a direct interpretation of the overall optimization strategy.
Before each update, we simulate $L$ many sample paths, then we decide which of these paths' terminal state is closest to the data (this is the effect of choosing $N_{\mathrm{neighbor}}$-nearest neighbors).
On the other hand, the divergence-kernel linear response formula tells us how changing parameters affects the marginal log-densities at those terminal states.
Combining both facts, we can now update parameters in the direction which increases the marginal log-densities on those terminal states close to the data.
Hence, overall, each parameter update moves the marginal distribution of the SDE towards the data.

This interpretation also suggests that the best choice of $N_{\mathrm{neighbor}}$ is often $1$ or a small number.
Indeed, increasing $N_{\mathrm{neighbor}}$ replaces the ideal conditional expectation $\mathbb E[\cdot\mid x_T=y_k]$ by a local average over nearby terminal states.
While this can reduce Monte-Carlo variance, it also introduces a bias: points in high-density regions of the sample cloud tend to be selected as neighbors more frequently than points near the periphery.
As a result, when $N_{\mathrm{neighbor}}$ is too large, the update may overweight central samples and encourage an artificial contraction of the learned distribution toward its center, even when the parameters are already close to optimal.
For example, in the idealized situation where the simulated terminals match the data exactly, taking $N_{\mathrm{neighbor}}=1$ yields an unbiased 1-to-1 neighbor assignment, whereas $N_{\mathrm{neighbor}}>1$ mixes in off-target terminals and reintroduces geometric bias toward locally denser regions.
Therefore we typically take $N_{\mathrm{neighbor}}=1$ (or a small value) unless additional smoothing is needed for stability.

We stress that the nearest-neighbor step is merely a simple numerical device for approximating the conditional expectation appearing in the divergence-kernel formula.
One could replace it by other local averaging schemes without changing the underlying update principle.
Therefore, the more substantive question is not the optimality of nearest-neighbor, but whether the resulting update direction admits the correct interpretation: namely, that it increases the model marginal log-density in regions supported by the data.

%This direct interpretation then helps us to determine that the best $N_{\mathrm{neighbor}}$ is just 1 or a small number.
%To see this, consider the extreme case where we arrive at the best parameters maximizing the KL or cross-entropy $\sum h^\gamma (y_k)$.
%Further assume that we obtained perfect samples, which coincide one-by-one with the data.
%Now if $N_{\mathrm{neighbor}}>1$, then the samples on the perimeter of the distribution would be the neighbor of fewer data points than those samples in the center.
%This means that in \Cref{e:shui},  samples on the perimeter will be added fewer times and hence have less weight.
%Then during the next update, the density on center samples will increase, and the distributions will move towards to center: we do not want this.

\subsection{A 1D problem with 2 parameters}

First, we try a 1-dimensional problem.
The SDE model is
\begin{equation*}\begin{split}
  d x
  = \left(\gamma^0-x \right) dt + \left(0.5+\exp{-(x-\gamma^1)^2}\right) dB,
  \quad \textnormal{where} \quad 
  x_0 \sim \cN (0, 1).
\end{split}\end{equation*}
The true parameter used to generate the data is $\gamma_{true}=[0,0]$.
The initial parameter in the optimization is $\gamma_0=[5,1]$.
The number of data points is $N_{\mathrm{data}}=200$.
The number of samples generated for each update of $\gamma$ is $L=200$.
The stepsize (or learning rate) is $\eta=1$.
The number of neighbors used to approximate each $y_k$ is $N_{\mathrm{neighbor}}=5$; we will test the effect of $N_{\mathrm{neighbor}}$ in the next example with higher dimension $M$.

Now we decide $\alpha$. 
For simplicity, we let it be a constant. 
To decide its value, recall that $\alpha$ is used to temper the growth of the covector process $\nu$.
In view of \Cref{t:divker_sde}, the homogeneous part of $\nu$ grows like
\begin{equation*}\begin{split}
  d \nu 
  = (\nabla \sigma \nabla \sigma^T 
  - \nabla F^T - \alpha)\nu dt + \cdots
\end{split}\end{equation*}
If $\alpha$ is too small then $\nu$ would grow exponentially fast; we do not want this.
For this 1D problem, we can directly compute these terms.
We want the homogeneous part of $\nu$ to decay, which can be achieved if
\begin{equation*}\begin{split}
  \nabla \sigma \nabla \sigma^T -
  \nabla F^T 
  = 4 \exp{-2(x-\gamma^1)^2}(x-\gamma^1)^2 + 1
  \le 2/e + 1 \le 3 \le \alpha
\end{split}\end{equation*}
So we can just set $\alpha$ to be somewhat larger than $3$, for example, we set $\alpha=5$.
For more complicated situations, we can just try a few $\alpha$ and pick the one whose corresponding $\nu$ is moderate.

We start from $\gamma_0$ and use gradient descent to minimize the KL-divergence.
The histograms of data $\{y_k\}$ and samples $\{x_{T,l}\}$ for $\gamma_0$ and $\gamma_{10}$, the parameter after 10 updates, are shown in \Cref{f:gm1d}.
As we can see, the two distributions get much closer during the optimization.
More specifically, the error $\gamma-\gamma_{true}$ is reduced from $[5,1]$ to $[-0.05,0.09]$.

\begin{figure}[ht] \centering
\includegraphics[height=0.32\textwidth]{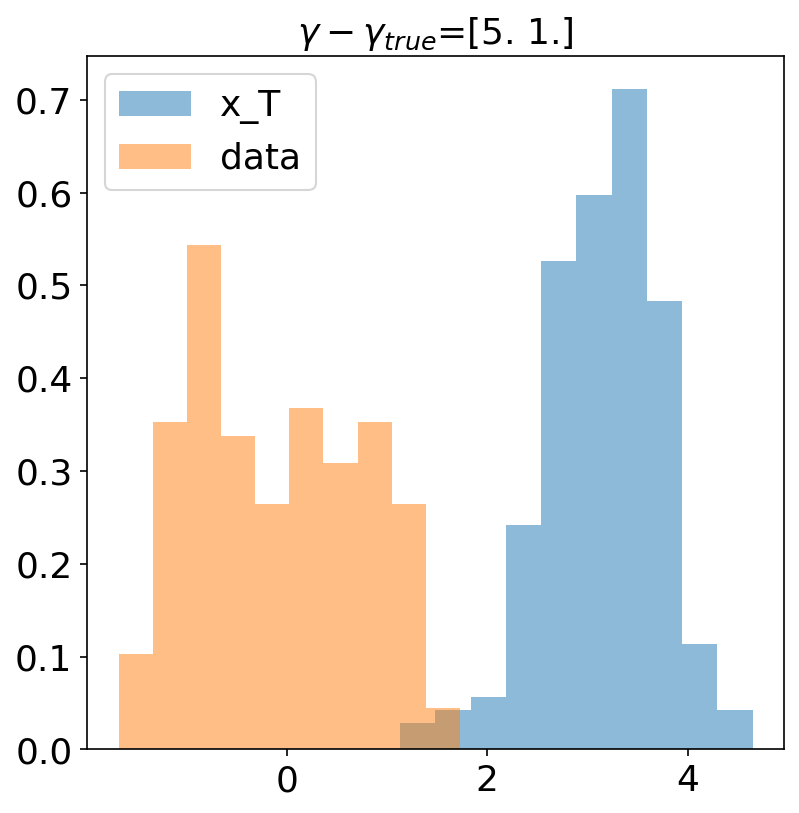} \hfill
\includegraphics[height=0.32\textwidth]{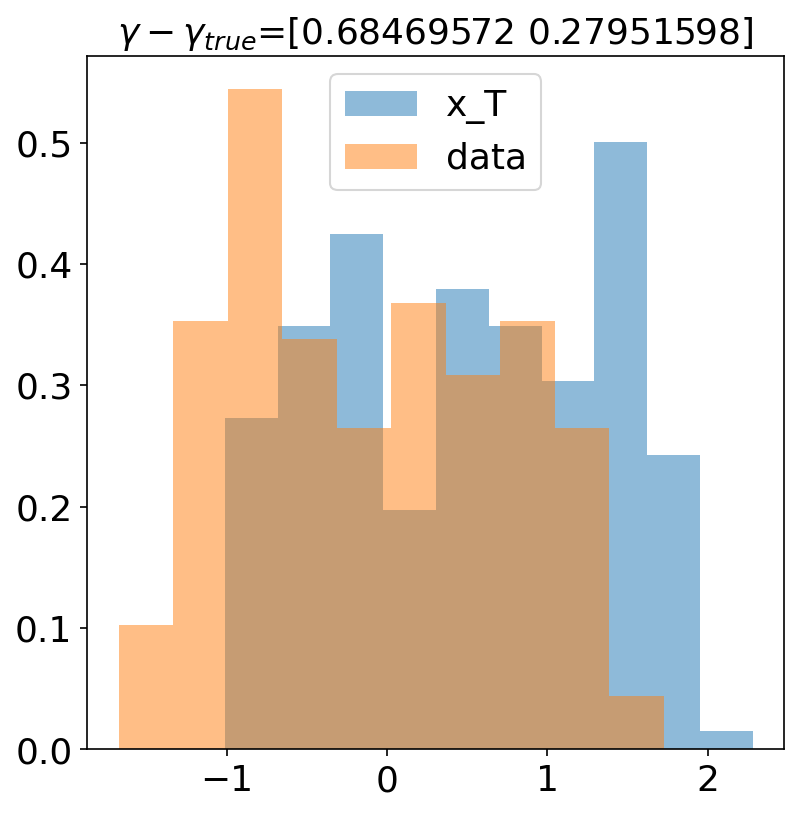} \hfill
\includegraphics[height=0.32\textwidth]{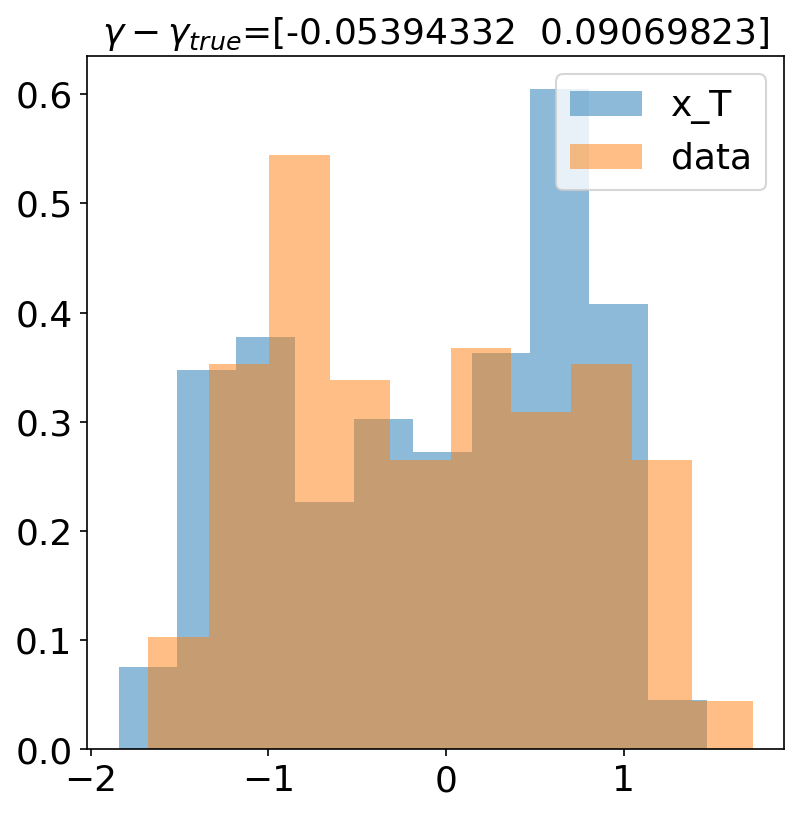}
  \caption{
  Histograms of data $\{y_k\}$ and samples $\{x_{T,l}\}$.
  Left to right: plots at $\gamma_0$, $\gamma_{2}$, and $\gamma_{10}$.
  }
  \label{f:gm1d}
\end{figure}

\subsection{A 5D problem with 6 parameters}
\label{s:generative_model_5d}

We try a 5-dimensional Lorenz 96 system.
The SDE model is
\begin{eqnarray*}
  d x^i
  = \left( \left(x^{i+1}-x^{i-2}\right) x^{i-1} - x^i + \gamma^i - 0.01 (x^i)^2 \right) dt 
  + \left(0.5+\exp{-(x-\gamma^{M}\one)^2} \right) dB^i,
  \\
  \textnormal{where} \quad
  i=0, \ldots, M-1;
  \quad
  M = 5;
  \quad
  \one=[1,1,1,1,1];
  \quad
  x_0 \sim \cN (0, I).
\end{eqnarray*}
The true parameter used to generate the data is $\gamma_{true}=[5,6,7,8,9,2]$.
The initial parameter in the optimization is $\gamma_0=[0,0,0,0,0,0]$.
The number of data points is $N_{\mathrm{data}}=200$.
The number of samples generated for each update of $\gamma$ is $L=200$.
The stepsize (or learning rate) is $\eta=1$.
The number of neighbors used to approximate each $y_k$ is $N_{\mathrm{neighbor}}=5$.

We start from $\gamma_0$ and use gradient descent to minimize the KL-divergence.
The history of $|\gamma_n-\gamma_{true}|^2/N_\gamma$ for $n=0$ to $50$ is shown in \Cref{f:gmlorenz}.
As we can see, $\gamma_n$ gets closer to $\gamma_{true}$ during optimization, with $\gamma_{48}$ achieving the minimum distance.
More specifically,
\begin{equation*}\begin{split}
 \gamma_0-\gamma_{true}
 = [-5, -6, -7, -8, -9, -2];
 \\
 \gamma_{48}-\gamma_{true}
 = [-0.33, 0.40, -0.26, -0.20, -0.24, -0.36].
\end{split}\end{equation*}

\begin{figure}[ht] \centering
\includegraphics[width=0.4\textwidth]{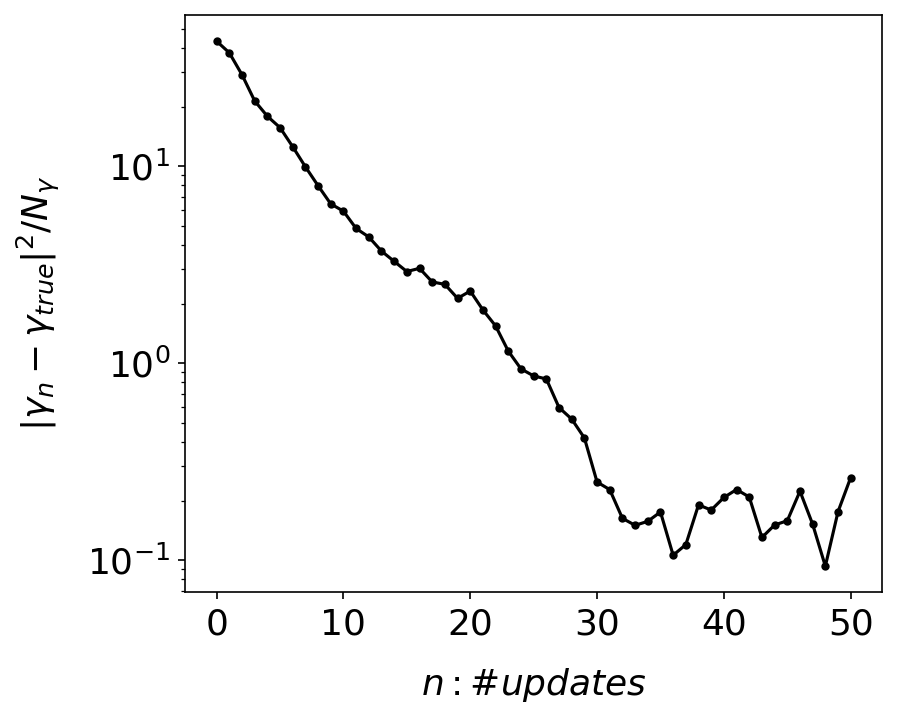} \hfill
  \caption{
  History of $|\gamma_n-\gamma_{true}|^2/N_\gamma$ for the 5D DK-SDE model.
  }
  \label{f:gmlorenz}
\end{figure}

\subsection{A 20D problem with 21 parameters}

We try a 20-dimensional system.
More importantly, we run some convergence test against several hyper-parameters and record computer time.
The SDE model is similar to the one in \Cref{s:generative_model_5d}, except for that now we set dimension $M=20$ and $N_\gamma=21$.
We set \(\gamma_{\mathrm{true}}\in\mathbb{R}^N_\gamma\) to ramp linearly from \(5\) to \(9\) over the first ten components and then ramp linearly back from \(9\) to \(5\) over the next ten components; additionally, we set the last entry \(\gamma_{\mathrm{true},\,N_\gamma-1}=1\).
The initial guess of all parameters are still zeros.
The number of data points is $N_{\mathrm{data}}=200$.

We vary several hyperparameters; when sweeping one parameter, we keep the others fixed at their default values.
In \Cref{f:gm20d_history} we plot the convergence history of \( |\gamma_n-\gamma_{\mathrm{true}}|^2/N_\gamma \) for \(n=0,\dots,1000\) under different choices of hyperparameters.
Specifically, we sweep \(\eta\), \(N_{\mathrm{neighbor}}\), and \(L\), while holding the remaining two at the defaults \(\eta=0.1\), \(N_{\mathrm{neighbor}}=1\), and \(L=3000\).
Smaller \(\eta\) leads to slower convergence but a better final error; in future work, it may therefore be beneficial to decrease \(\eta\) according to a schedule.
Using \(N_{\mathrm{neighbor}}=1\) introduces some oscillations but yields the best overall result, which is encouraging since this choice does not depend on the dimension \(M\).
Finally, increasing the sample size \(L\) improves the results, as expected.

\begin{figure}[ht] \centering
  \makebox[\textwidth][c]{
    \begin{minipage}{1.2\textwidth}
    \centering
    \includegraphics[height=0.32\textwidth]{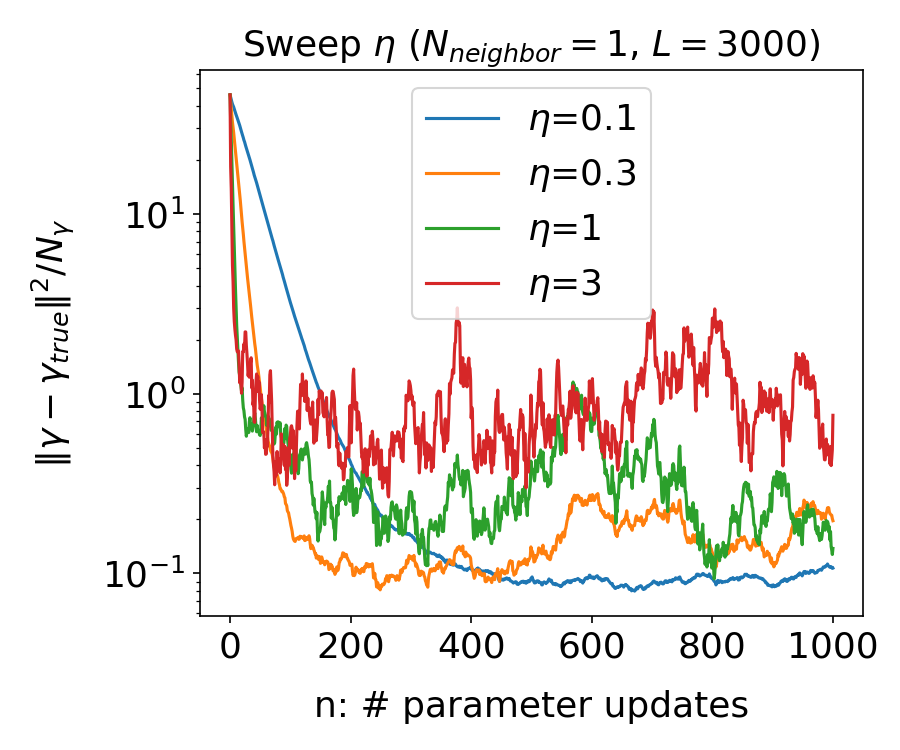}\hfill
    \includegraphics[height=0.32\textwidth,trim={32mm 0 0 0},clip]{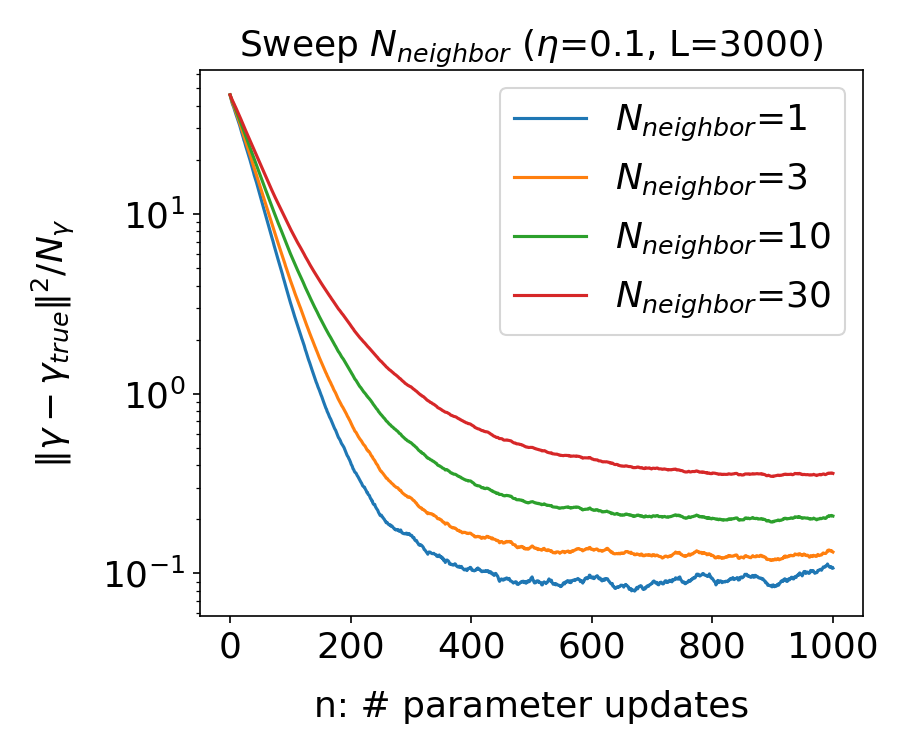}\hfill
    \includegraphics[height=0.32\textwidth,trim={32mm 0 0 0},clip]{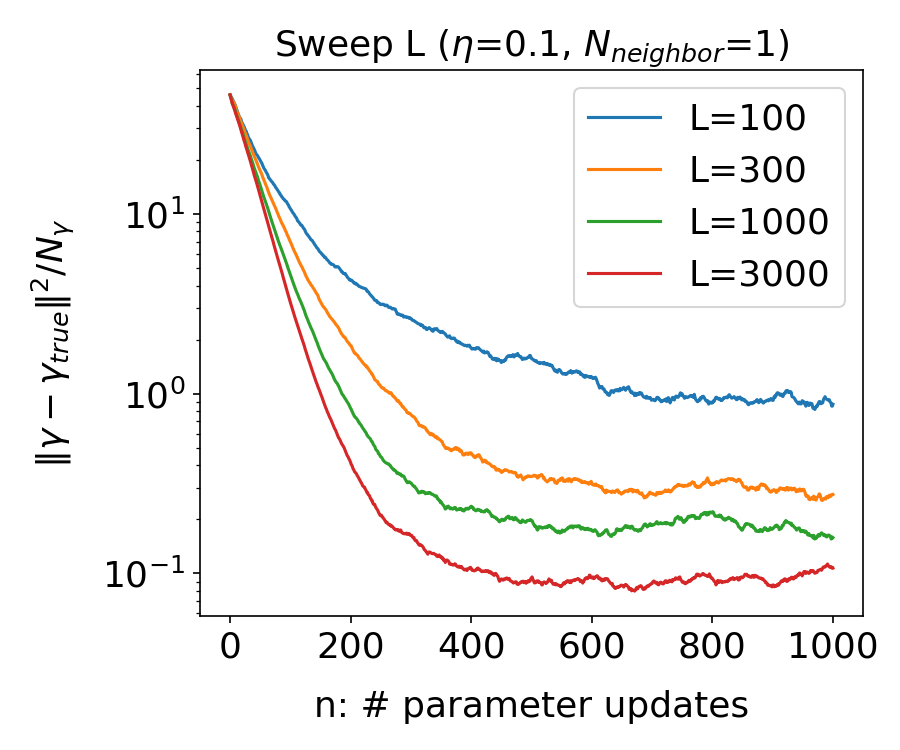}
    \end{minipage}
  }
  \caption{
    Convergence history for the 20D DK-SDE model.
    From left to right: sweep over learning rate $\eta$, $N_{\mathrm{neighbor}}$, and number of samples $L$.
  }
  \label{f:gm20d_history}
\end{figure}

Then we plot the data and generated samples under the default hyperparameters in \Cref{f:gm20d_scatter}.
The wall time for 1000 updates on an RTX 5060 GPU is about 201 seconds.
As intended, the sample distribution moves closer to the data distribution during optimization.
More specifically, the scatter plots indicate good agreement at least in the first and second moments, which is a natural consequence of distributions getting closer.

\begin{figure}[ht] \centering
\makebox[\textwidth][c]{
    \begin{minipage}{1.1\textwidth}
    \centering
    \includegraphics[height=0.37\textwidth]{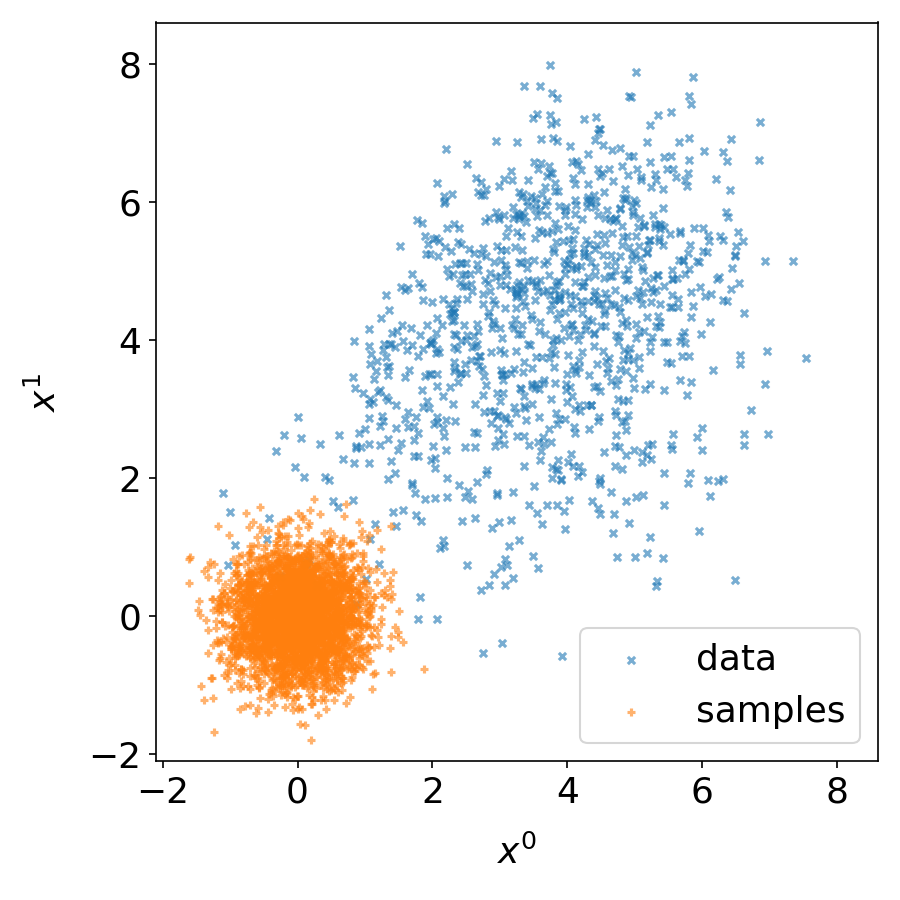}\hfill
    \includegraphics[height=0.37\textwidth,trim={25mm 0 0 0},clip]{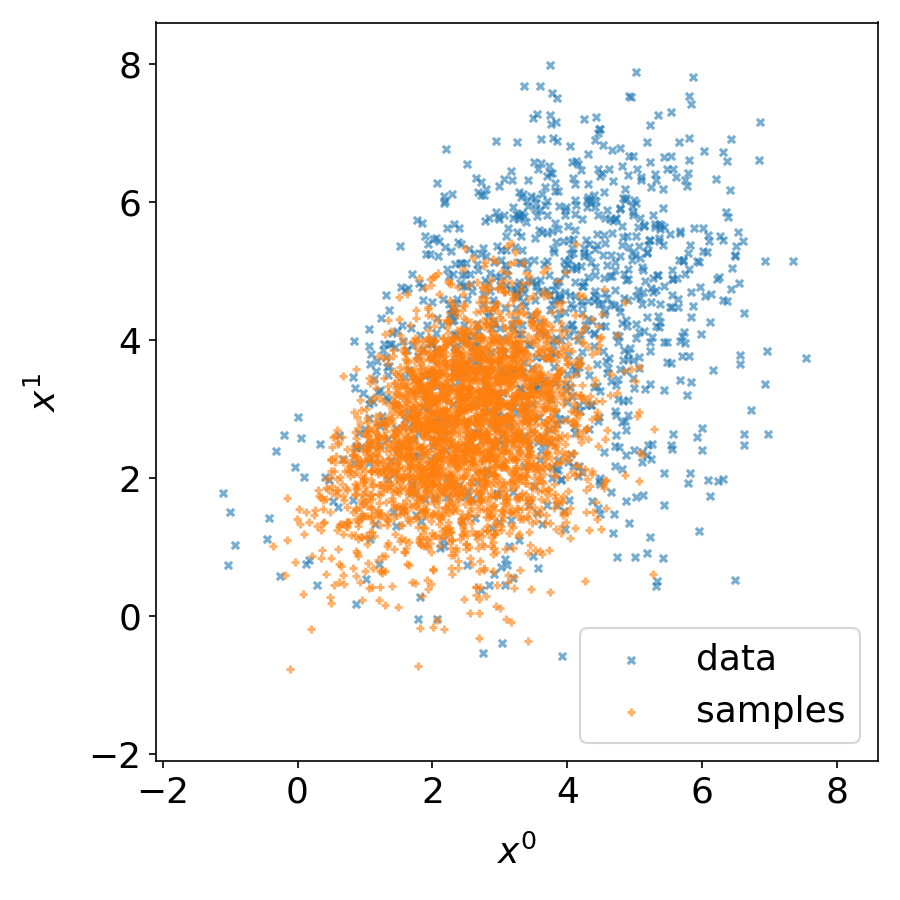}\hfill
    \includegraphics[height=0.37\textwidth,trim={25mm 0 0 0},clip]{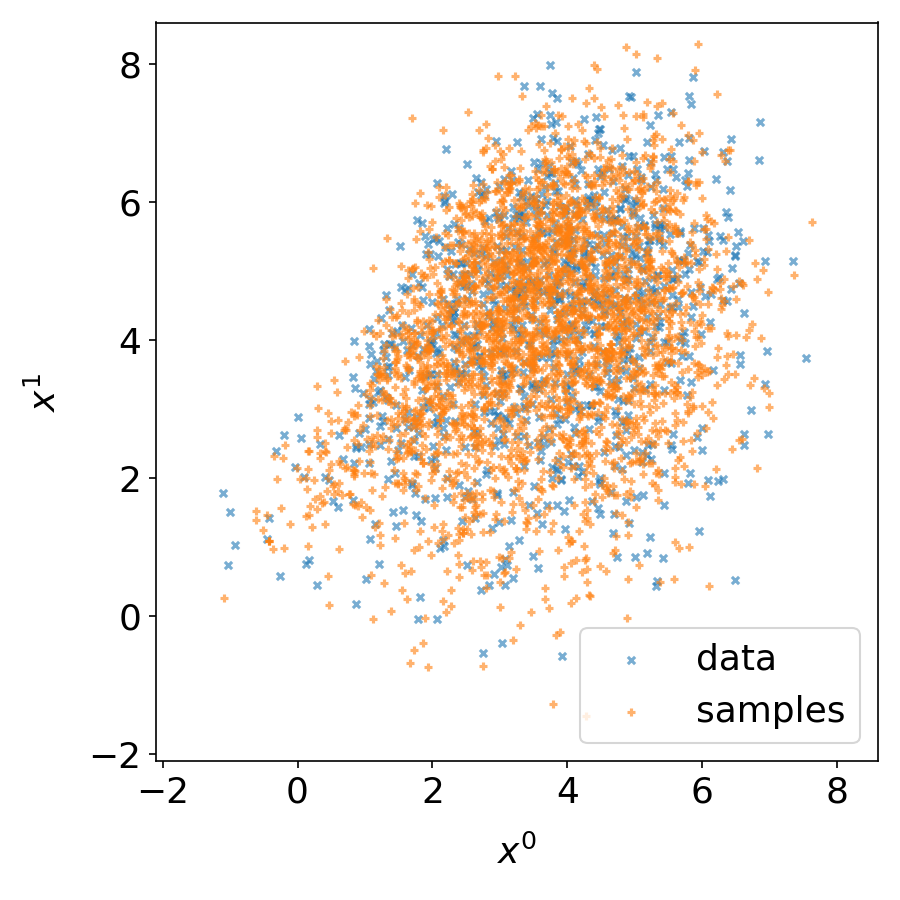}
    \end{minipage}
  }
  \caption{
    Scatter plot of $(x^0, x^1)$ for samples generated by the DK-SDE model and the given data.
    From left to right: samples generated by SDE with initial parameter, after 70 updates, after 500 updates.
  }
  \label{f:gm20d_scatter}
\end{figure}

Direct baselines are unavailable for this problem. Methods based on differentiating through trajectories (adjoint/tangent backpropagation) are numerically ineffective once the dynamics becomes unstable/nonhyperbolic (often already at moderate horizons in our setting), while likelihood-ratio constructions typically do not apply to parameters in the diffusion or suffer prohibitive variance. 
Other approaches (e.g.\ discrepancy or feature matching) optimize different objectives.

%\section*{Acknowledgements}

%The author thanks someone for helpful discussions.

\section*{Declarations}

\oldsubsection*{Data availability statement}
The code used in this paper is posted at \url{https://github.com/niangxiu/DKlinR}.
There are no other associated data.

\oldsubsection*{Funding}
The author has no active funding.

\oldsubsection*{Conflicts of interests/Competing interests}
The author has no relevant conflicting or competing interests.

%\oldsubsection*{Author Contribution}
%The author wrote this paper alone.

\begin{appendix}

\section{Kernel-differentiation method}
\label{s:kd}

We briefly go through the kernel-differentiation method, and explain its problem when applied to linear responses of high-dimensional systems with respect to diffusion coefficients.

\subsection{One-step results} \label{s:ker1step}

Let $x^\gamma_0\sim h^\gamma_0$, $b^\gamma_0\sim k$, the one-step dynamics is 
\begin{equation*}\begin{split}
  x^\gamma_1 = f^\gamma_0(x^\gamma_0) + \sigma^\gamma_0 (x^\gamma_0) b_0.
\end{split}\end{equation*}
The most natural expression of $h_1$ is:
\begin{equation*}
  h^\gamma_1(x_1) = \int h^\gamma_0(x_0) (\sigma^\gamma_0) ^{-M}(x_0) 
  k\left(\frac{x_1-f^\gamma_0(x_0)}{\sigma^\gamma_0(x_0)}\right) dx_0.
\end{equation*}
Take derivative with respect to $\gamma$, then the probability kernel $k$ will be differentiated, and we get the kernel-differentiation formula for one-step.

\begin{lemma} [Kernel formula for one-step linear response] \label{t:ker1step}
\begin{equation*}
  \delta \log h^\gamma_1(x_1) 
  = \E{\delta \log h^\gamma_0(x_0) 
  - M \delta \log \sigma^\gamma_0(x_0)
  - \frac {\nabla \log k(b_0)}{\sigma_0(x_0)}
  \left( \delta \sigma^\gamma_0(x_0)b_0
  + \delta f^\gamma(x_0)
  \right)
  \middle| x_1 }.
\end{equation*}
\end{lemma}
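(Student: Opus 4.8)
The plan is to differentiate the integral expression for $h^\gamma_1$ directly with respect to $\gamma$, keeping the integration variable $x_0$ fixed. This is the essential contrast with the divergence argument of \Cref{t:div1step}: there one first changes variables from $x_0$ to $b_0$, freezing the noise so that the perturbation lands on the Jacobian determinant; here I would leave $x_0$ as the dummy variable, so the derivative instead passes through the argument $b_0 = (x_1 - f^\gamma_0(x_0))/\sigma^\gamma_0(x_0)$ of the kernel $k$. Applying the product rule to the integrand $h^\gamma_0(x_0)\,(\sigma^\gamma_0)^{-M}(x_0)\,k(b_0)$ produces three pieces. The density factor contributes $\delta\log h^\gamma_0$; the prefactor $(\sigma^\gamma_0)^{-M}$ contributes $-M\,\delta\log\sigma^\gamma_0$, which is precisely where the undesirable factor of $M$ flagged in \Cref{s:div1step} enters; and the kernel contributes $\nabla\log k(b_0)\cdot\delta b_0$ after writing $\nabla k = k\,\nabla\log k$.

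The only genuine computation is $\delta b_0$. Differentiating $b_0=(x_1-f^\gamma_0)/\sigma^\gamma_0$ with $x_1$ held fixed, and then using the identity $x_1-f^\gamma_0=\sigma^\gamma_0 b_0$ to eliminate $x_1$, I would obtain $\delta b_0 = -(\delta f^\gamma_0 + b_0\,\delta\sigma^\gamma_0)/\sigma^\gamma_0$. Substituting this into the kernel piece gives the term $-\sigma_0^{-1}\,\nabla\log k(b_0)\cdot(\delta f^\gamma_0 + b_0\,\delta\sigma^\gamma_0)$, evaluated at $\gamma=0$, which matches the third summand in the statement.

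Finally I would factor the common weight $h_0(x_0)\,(\sigma_0)^{-M}(x_0)\,k(b_0)$ out of all three pieces and divide through by $h_1(x_1)$ to form $\delta\log h^\gamma_1$. By Bayes' rule, this weight normalized by $h_1$ is exactly the conditional density of $x_0$ given $x_1$ at $\gamma=0$, so the integral over $x_0$ collapses to the conditional expectation $\E{\,\cdot\,\middle|\,x_1}$, yielding the claimed formula. I do not expect any real obstacle: the proof is a one-line application of the chain rule, and the only points requiring care are that the argument $b_0$ must be treated as $\gamma$-dependent (rather than frozen as in the divergence method), and that the perturbation is to be taken first and then evaluated at $\gamma=0$, so that the identification of the weight with a conditional density is legitimate.
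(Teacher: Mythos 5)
Your proposal is correct and follows essentially the same route as the paper: differentiate the natural integral expression for $h^\gamma_1$ over the fixed dummy variable $x_0$, let the product rule hit $h^\gamma_0$, $(\sigma^\gamma_0)^{-M}$, and the kernel argument $b_0$, then divide by $h_1$ and recognize the conditional expectation. Your explicit computation of $\delta b_0$ agrees with the chain-rule step implicit in the paper's proof, so there is nothing to add.
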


\begin{proof}
Take derivative of \Cref{e:h1} with respect to $\gamma$, 
\begin{equation*}\begin{split}
  \delta h^\gamma_1(x_1) 
  = \int \left(
  \frac{\delta h^\gamma_0}{h_0}(x_0)
  - M \frac {\delta \sigma^\gamma_0} {\sigma_0}(x_0)
  - \frac {\nabla k} {k} (b_0) 
  \left(
  \frac {\delta \sigma^\gamma_0} {\sigma_0}(x_0) b_0
  + \frac {\delta f^\gamma_0} {\sigma_0}(x_0)
  \right)
  \right)
  h_0(x_0) \frac{k\left(b_0\right)} {\sigma^{M}(x_0)} dx_0,
\end{split}\end{equation*}
where $b_0=(x_1-f_0(x_0)) / \sigma_0(x_0)$.
Divide both sides by $h_1(x_1)$ and apply the definition of the conditional expectation to prove the lemma.
\end{proof}

The second term in \Cref{t:ker1step} can be very large for high-dimensional noise since it involves $M$.
For now, we still use noise in all directions, so we try to avoid using the kernel formula for parameter-derivative.
The kernel formula for the spatial derivative does not have terms involving $M$ \cite{divKer}, so we can still use that.
We will try to use low-dimensional noise in later papers, which will incur additional difficulties.

\subsection{Discrete-time many-steps }

First by recursively apply \Cref{t:ker1step}, we can prove the following kernel-differentiation formula for linear response.
Here, if we set $\delta \sigma =0$, this becomes the typical likelihood ratio formula.

\begin{lemma} [N-step kernel-differentiation formula for linear responses] \label{t:kernstep}
\begin{equation*}
  \delta \log h^\gamma_N(x_N) 
  = \E{\delta \log h^\gamma_0(x_0) 
  - \sum_{n=0}^{N-1}
  \frac {1}{\sigma_n(x_n)}
  \left(
  M \delta \sigma^\gamma_n(x_n)
  + \nabla \log k(b_n)
  \left( \delta \sigma^\gamma_n(x_n)b_n
  + \delta f^\gamma(x_n)
  \right)
  \right)
  \middle| x_N }.
\end{equation*}
\end{lemma}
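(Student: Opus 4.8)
The plan is to prove the formula by induction on $N$, recursively applying the one-step kernel formula of \Cref{t:ker1step}. The base case $N=1$ is exactly \Cref{t:ker1step}, once I rewrite $M\delta\log\sigma^\gamma_0 = M\delta\sigma^\gamma_0/\sigma_0$ so that the summand matches the claimed form. For the inductive step I abbreviate the local contribution at step $n$ as
\[
  S_n := -\frac{1}{\sigma_n(x_n)}\left(M\delta\sigma^\gamma_n(x_n) + \nabla\log k(b_n)\left(\delta\sigma^\gamma_n(x_n)b_n + \delta f^\gamma(x_n)\right)\right),
\]
and I note that $b_n = (x_{n+1}-f_n(x_n))/\sigma_n(x_n)$ is a function of $(x_n,x_{n+1})$, so each $S_n$ is measurable with respect to $\sigma(x_n,x_{n+1})$.

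First I would apply \Cref{t:ker1step} to the single step from $N-1$ to $N$. This is legitimate because the one-step formula depends only on the marginal density $h^\gamma_{N-1}$ of the initial point, the transition kernel, and the terminal density $h^\gamma_N$; by the Markov property the conditional law of $x_{N-1}$ given $x_N$ in the full chain coincides with the one-step backward kernel built from $h_{N-1}$. This yields
\[
  \delta\log h^\gamma_N(x_N) = \E{\delta\log h^\gamma_{N-1}(x_{N-1}) + S_{N-1} \middle| x_N}.
\]
Next I would substitute the induction hypothesis $\delta\log h^\gamma_{N-1}(x_{N-1}) = \E{\delta\log h^\gamma_0(x_0) + \sum_{n=0}^{N-2}S_n \middle| x_{N-1}}$, which produces a nested conditional expectation.

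The main obstacle, and the only step needing care, is collapsing the nested conditioning, since the inner expectation conditions on $x_{N-1}$ while the outer conditions on $x_N$. The resolution is the Markov property: writing $Z := \delta\log h^\gamma_0(x_0) + \sum_{n=0}^{N-2}S_n$, which is measurable with respect to $\sigma(x_0,\ldots,x_{N-1})$, the point $x_N$ is conditionally independent of $Z$ given $x_{N-1}$, so $\E{Z \middle| x_{N-1}} = \E{Z \middle| x_{N-1}, x_N}$, and the tower property then gives $\E{\E{Z\middle|x_{N-1}}\middle|x_N} = \E{Z\middle|x_N}$. Since $S_{N-1}$ is already measurable with respect to $\sigma(x_{N-1},x_N)$, it passes through the outer expectation unchanged, and I obtain
\[
  \delta\log h^\gamma_N(x_N) = \E{\delta\log h^\gamma_0(x_0) + \sum_{n=0}^{N-1}S_n \middle| x_N},
\]
completing the induction. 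This is the same conditioning argument used in the proof of \Cref{t:divkernstep}, but it is cleaner here: because every $S_n$ is already adapted to the backward pair $(x_n,x_{n+1})$, no auxiliary score estimator $\nu_n$ and no appeal to a separate score formula are required, and the recursion simply telescopes the local terms.
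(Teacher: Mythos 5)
Your proposal is correct and follows essentially the same route as the paper, which simply asserts that the lemma follows "by recursively applying" \Cref{t:ker1step}; your induction, with the Markov-property collapse of the nested conditional expectations (the same conditioning argument the paper spells out for \Cref{t:divkernstep}), is exactly the intended fleshing-out of that assertion. You also correctly note the simplification that, unlike in \Cref{t:divkernstep}, every local term here is already measurable with respect to $\sigma(x_n,x_{n+1})$, so no auxiliary score process is needed.
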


\subsection{Formal continuous-time limit}

For the continuous time limit, we first consider the time-discretized SDEs.
Note that if $\delta \sigma\neq 0$, then the two terms involve $\delta \sigma$ in the kernel formula in \Cref{t:ker1step} becomes $O(1)$ in each step, so the integration in time does not converge --
this is why likelihood ratio method always uses additive noise independent of the parameter $\gamma$.
For the degenerate case where $\delta \sigma\equiv0$, \Cref{t:kernstep} converges to the classical likelihood ratio method in continuous time.

\begin{lemma} [likelihood ratio formula for linear response of SDEs] \label{t:ker_sde}
If $\delta \sigma\equiv0$,
\begin{equation*}\begin{split}
  \delta \log h_T(x_T)
  =
  \E{\delta \log h^\gamma_0(x_0) 
  + \int_{t=0}^{T}
  \frac {1}{\sigma_t(x_t)}
  \delta F^\gamma(x_t)
  \cdot dB
  \middle| x_T }.
\end{split}\end{equation*}
\end{lemma}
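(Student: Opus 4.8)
The plan is to specialize the discrete-time kernel-differentiation formula of \Cref{t:kernstep} to the degenerate case $\delta\sigma\equiv 0$, and then pass to the continuous-time limit using the discretization dictionary in \Cref{e:lifu}. Setting $\delta\sigma^\gamma_n\equiv 0$ annihilates both the $M\delta\sigma$ term and the $\delta\sigma^\gamma_n b_n$ term inside the kernel factor, so \Cref{t:kernstep} collapses to
\[
  \delta \log h^\gamma_N(x_N)
  = \E{\delta \log h^\gamma_0(x_0)
  - \sum_{n=0}^{N-1}
  \frac{1}{\sigma_n(x_n)}\,
  \nabla\log k(b_n)\cdot \delta f^\gamma(x_n)
  \,\middle|\, x_N}.
\]

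Next I substitute the exact identities for the Gaussian kernel and the Euler drift from \Cref{e:lifu}. With $b_n=\DB_n$ we have the \emph{exact} relation $\nabla\log k(\DB_n)=-\DB_n/\Dt$, and since $f^\gamma_n(x)=x+F^\gamma_n(x)\Dt$ we have $\delta f^\gamma_n=\delta F^\gamma_n\,\Dt$ exactly. Hence each summand becomes
\[
  -\frac{1}{\sigma_n}\Bigl(-\frac{\DB_n}{\Dt}\Bigr)\cdot\delta F^\gamma_n\,\Dt
  = \frac{1}{\sigma_n}\,\delta F^\gamma_n\cdot\DB_n,
\]
so that
\[
  \delta \log h^\gamma_N(x_N)
  = \E{\delta \log h^\gamma_0(x_0)
  + \sum_{n=0}^{N-1}
  \frac{1}{\sigma_n(x_n)}\,\delta F^\gamma(x_n)\cdot\DB_n
  \,\middle|\, x_N}.
\]
Unlike most of the SDE limits elsewhere in the paper, no approximation is needed at this stage: the two substitutions are equalities for the Gaussian kernel and the Euler scheme, and there is no $\DB\,\DB\approx\Dt$ contraction to perform precisely because the $\delta\sigma\, b_n$ term has vanished. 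This is exactly why the likelihood-ratio formula survives only in the $\delta\sigma\equiv 0$ case, as noted before the statement.

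Finally I pass to the limit $\Dt\to0$. Each summand is $O(\sqrt{\Dt})$, and the integrand $\sigma_t^{-1}\delta F^\gamma_t$ is evaluated at the left endpoint $x_n$, hence it is $\cF_{n\Dt}$-adapted; the sum is therefore a forward (Ito) Riemann sum and converges to the Ito integral $\int_{t=0}^{T} \sigma_t^{-1}\,\delta F^\gamma\cdot dB$, with the conditioning $\E{\cdot\,\middle|\,x_N}$ becoming $\E{\cdot\,\middle|\,x_T}$ in the limit. This yields the claimed identity (the left-hand side being the linear response $\delta\log h^\gamma_T$). The main -- indeed essentially the only -- obstacle is justifying this last convergence rigorously: one must control the $L^2$ convergence of the martingale $\sum_n \sigma_n^{-1}\delta F^\gamma_n\cdot\DB_n$ to the Ito integral, whose quadratic variation is $\sum_n \sigma_n^{-2}|\delta F^\gamma_n|^2\,\Dt\to\int_{t=0}^{T}\sigma_t^{-2}|\delta F^\gamma|^2\,dt$, and verify that the discarded higher-order remainders accumulate to zero. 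Consistent with the paper's stated conventions, I treat this passage formally and do not track the integrability hypotheses here.
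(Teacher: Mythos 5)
Your proposal is correct and follows exactly the route the paper intends: the paper gives no written proof of \Cref{t:ker_sde}, but the paragraph preceding it states that one specializes \Cref{t:kernstep} to $\delta\sigma\equiv 0$ and passes to the continuous-time limit, which is precisely what you do, filling in the substitutions $\nabla\log k(\DB_n)=-\DB_n/\Dt$ and $\delta f^\gamma_n=\delta F^\gamma_n\,\Dt$ from \Cref{e:lifu} with the correct sign. Your parenthetical correction that the left-hand side should read $\delta\log h^\gamma_T(x_T)$ rather than $\nabla\log h_T(x_T)$ is also right.
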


\end{appendix}

\bibliographystyle{abbrv}
{\footnotesize\bibliography{library}}

\end{document}